\documentclass[a4paper]{amsart}
\usepackage[english]{babel}
\usepackage{lmodern}
\usepackage{newlfont}
\usepackage{booktabs}
\usepackage{color}
\usepackage[T1]{fontenc}
\usepackage[utf8]{inputenc}
\usepackage{indentfirst}
\usepackage{amsmath}
\usepackage{amsmath,amssymb}
\usepackage{amsthm}
\usepackage{geometry}
\geometry{a4paper,top=3cm,bottom=3cm,left=3.5cm,right=3.5cm,heightrounded,bindingoffset=5mm}
\usepackage{mathtools}
\usepackage{listings}
\usepackage{amsfonts}
\usepackage{braket}
\usepackage{emptypage}
\usepackage{newlfont}
\usepackage{amssymb}
\usepackage{graphicx}
\usepackage[italian]{varioref}
\usepackage{accents}
\usepackage{comment}
\usepackage{stmaryrd}

\makeatletter
\labelformat{equation}{\tagform@{#1}}
\makeatother
\usepackage{hyperref}
\usepackage{relsize}
\usepackage{faktor}
\usepackage{enumerate}
\usepackage{fancyhdr}

\usepackage[colorinlistoftodos]{todonotes}

\DeclarePairedDelimiter{\abs}{\lvert}{\rvert}


\renewcommand{\phi}{\varphi}
\renewcommand{\epsilon}{\varepsilon}

\newcommand{\la}{\lambda}
\newcommand{\numberset}{\mathbb}
\newcommand{\e}{\varepsilon}
\newcommand{\eps}{\varepsilon}
\newcommand{\N}{\numberset{N}}
\newcommand{\C}{\numberset{C}}
\newcommand{\R}{\numberset{R}}

\newcommand{\norm}[1]{\left\|#1\right\|}

\newcommand{\si}{0}
\newcommand{\Om}{\Omega}

\theoremstyle{definition}

\theoremstyle{definition}                                                                         
\newtheorem{definizione}{Definizione}[section]
\theoremstyle{definition}                                                                         
\newtheorem{remark}[definizione]{Remark}
\theoremstyle{plain}                                                                              
\newtheorem{thm}[definizione]{Theorem}
\theoremstyle{plain}    

\theoremstyle{plain}     
\newtheorem{lemma}[definizione]{Lemma}
\theoremstyle{plain}

\theoremstyle{definition}                                                                         

\numberwithin{equation}{section}

\pagestyle{fancy}
\fancyhf{}
\fancyhead[LE, RO]{\thepage}
\fancyhead[LO]{\nouppercase{\rightmark}}
\fancyhead[RE]{\nouppercase{\leftmark}}

\begin{document}
\title[On the number of critical points of stable solutions]{On the number of critical points of stable solutions in bounded strip-like domains}
\thanks{This work was supported by INDAM-GNAMPA}

\author[De Regibus]{Fabio De Regibus}
\address{Dipartimento di Matematica, Universit\`a di Roma ``La Sapienza'', P.le A. Moro 2 - 00185 Roma, Italy, e-mail: {\sf fabio.deregibus@uniroma1.it}.}
\author[Grossi]{Massimo Grossi }
\address{Dipartimento di Matematica, Universit\`a di Roma ``La Sapienza'', P.le A. Moro 2 - 00185 Roma, Italy, e-mail: {\sf massimo.grossi@uniroma1.it}.}

\maketitle
\begin{abstract}
In this paper we show that there exists a family of domains $\Om_\e\subseteq\R^N$ with $N\ge2$, such that the \emph{stable} solution of the problem
\[
\begin{cases}
-\Delta u= g(u)&\hbox{in }\Omega_\e\\
u>0&\hbox{in }\Omega_\e\\
u=0&\hbox{on }\partial\Omega_\e
\end{cases}
\]
admits $k$ critical points with $k\ge2$. Moreover the sets $\Om_\e's$ are star-shaped and ``close'' to a strip as $\e\to0$. Next, if $g(u)\equiv1$ and $N\ge3$ we exhibit a family of domain $\Om_\e's$ with {\em positive mean curvature} and solutions $u_\e$ which have $k$ critical points with $k\ge2$. In this case, the domains $\Om_\e$ turn out to be ``close'' to a cylinder as $\e\to0$.
\end{abstract}

\section{Introduction and main results}
In this paper we investigate the number of critical points of solutions $u$ to the following problem
\begin{equation}
\label{PB1}
\begin{cases}
-\Delta u=g(u)&\hbox{in }\Omega\\
u>0&\hbox{in }\Omega\\
u=0&\hbox{on }\partial\Omega
\end{cases}
\end{equation}
where $\Omega$ is a smooth bounded domain in $\R^N$, $N\ge2$ and $g$ is a smooth nonlinearity.

It is known that this problem strongly depends on the geometry of $\Om$. A very studied case is when $\Om$ is \emph{convex}: in this case it is expected the uniqueness of the critical point. One of the first results in this direction is the one in~\cite{ml}, where it is proved the strict convexity of the level sets for $N=2$ and $g\equiv1$. Of course this property implies the uniqueness of the critical point.

Another classical problem concerns the first eigenfunction of the laplacian with zero Dirichlet boundary condition. In this case the uniqueness of the critical point was proved in~\cite{bl} (see also~\cite{app}).

A very general result on the uniqueness of the critical point of solutions of~\ref{PB1} is given in the seminal paper by~\cite{gnn} where it is only assumed that $g$ is a locally Lipschitz function and $\Om$ is a symmetric domain in $\R^N$ which is convex in any direction.

Some conjectures claim that the symmetry assumption in Gidas, Ni, Nirenberg's Theorem can be removed. An interesting contribution in this direction is the result in~\cite{cc} where the uniqueness of the critical point is proved for \emph{semi-stable} solutions in planar domains with strictly positive boundary curvature.

We recall that a solution $u$ of problem~\ref{PB1} is said to be \emph{stable} (or \emph{semi-stable}) if the linearized operator at $u$ is positive definite, i.e. if for all $\xi\in\mathcal C^{\infty}_{0}(\Omega)\setminus\{0\}$ one has
\[
\int_{\Omega}|\nabla\xi|^{2}-\int_{\Omega}g'(u)|\xi|^{2}>0\ (\ge0),
\]
or equivalently if the first eigenvalue of the linearized operator $-\Delta-g'(u)$ in $\Omega$ is positive (non-negative).

The result in~\cite{cc} was recently extended allowing $\partial\Om$ to have points with zero curvature, see~\cite{dgm}.

Next we are going to discuss what happens if $\partial\Om$ contains points with \emph{negative} curvature. We will see that not only the uniqueness of the critical point is lost, but it is not even possible to have any bound on the number of critical points. Indeed, in~\cite{gg3} it was proved that there exists a family of bounded domains $\Om_\e$ in $\R^2$ and a solution $u_\e$ to 
\[
\begin{cases}
-\Delta u_\e=1&\hbox{in }\Omega_\e\\
u_\e>0&\hbox{in }\Omega_\e\\
u_\e=0&\hbox{on }\partial\Omega_\e
\end{cases}
\]
such that
\begin{enumerate}[(i)]
\item $\Omega_\e$ is starshaped with respect to an interior point;
\item $\Omega_\e$ locally converges to a strip $\mathcal S=\set{(x,y)\in\R^2|-1<y<1}$ for $\e\to0$, i.e. for all compact set $K\subseteq\R^2$ it holds $\abs{K\cap(\mathcal S\Delta\Omega_\e)}\to0$ as $\e\to0$;
\item The curvature of $\partial\Omega_\e$ change sign once and $\min\limits_{(x,y)\in\partial\Om_\e} Curv(x,y)\to0$ as $\e\to0$;
\item $u_\e$ has at least $k$ maximum points with $k\ge2$.\\
\end{enumerate}
In some sense, for $\e>0$ small, the domains $\Omega_\e$ are ``close'' to be convex and the minimum negative value of the curvature of $\partial\Omega_\e$ is close to zero as we want.

The aim of this paper is twofold: first we want to extend the result of~\cite{gg3} to more general nonlinearities.  On the other hand we want to investigate the role of the curvature of $\partial\Om$ in higher dimensions.

Concerning the first point, let us assume that the nonlinearity has the form $g=\lambda f$ where $f$ is smooth and satisfies
\begin{equation}
\label{i2}
f:\R\to\R\,\hbox{ is increasing and convex},
\end{equation}
\begin{equation}\label{i3}
f(0)>0.
\end{equation}
In this setting it is well known that there exists $\lambda^*(\Omega)>0$ such that for all $\lambda\in (0,\lambda^*(\Omega))$ the problem
\begin{equation}
\label{PB0}
\begin{cases}
-\Delta u=\lambda f(u)&\hbox{in }\Omega\\
u>0&\hbox{in }\Omega\\
u=0&\hbox{on }\partial\Omega
\end{cases}
\end{equation}
admits a positive stable solution, see for instance~\cite{Bandle(book)},~\cite{cr75} and~\cite{mp} and the references therein.

Finally let us denote by $\mathcal S$ the strip $\mathcal S=\set{(x,y)\in\R^{N}\times\R| -1<y<1}$. Our first result claims that, if $f$ satisfies~\ref{i2} and~\ref{i3} then there exists a family of bounded smooth domains $\Om_\e$ ``close'' to the strip $\mathcal S$ and a solution $u_\e$ to~\ref{PB0} with $k$ maximum points, $k\ge2$. The precise statement follows.
\begin{thm}
\label{THM1}
Assume that $f$ satisfies~\ref{i2} and~\ref{i3}.

Then for any $\lambda\in(0,\la^*(-1,1))$ and for all $k\in\N$ there exists a family of smooth and bounded domain $\Omega_\e\subseteq\R^{N+1}$ such that
\begin{enumerate}[(i)]
\item $\Omega_\e$ is starshaped with respect to the origin and symmetric with respect to the hyperplanes $x_j=0$ for $j=1,\dots,N$ and $y=0$;
\item $\Omega_\e$ locally converges to the strip $\mathcal S$ for $\e\to0$, i.e. for all compact set $K\subseteq\R^{N+1}$ it holds $\abs{K\cap(\mathcal S\Delta\Omega_\e)}\to0$ as $\e\to0$;
\item $\lambda^*(\Omega_\e)\ge\la^*(-1,1)$ for $\e$ small enough;
\item if $u_\e$ is the stable solution of problem~\ref{PB0} in $\Omega_\e$ for some $0<\lambda<\lambda^*(\Omega_\e)$ then $u_\e$ has at least $k$ maximum points.
\end{enumerate}
\end{thm}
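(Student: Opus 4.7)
The plan is to realize $\Om_\e$ as a smooth, bounded, star-shaped ``corrugation'' of the strip $\mathcal S$ and to show that the stable solution $u_\e$ is an $O(\e)$-perturbation of the one-dimensional profile $U(y)$, with the perturbation inheriting the corrugation geometry of $\partial\Om_\e$ and thereby producing $k$ local maxima of $u_\e$ on $\{y=0\}$. Let $U\in C^\infty([-1,1])$ be the stable, even, strictly concave, positive solution of $-U''=\la f(U)$ on $(-1,1)$ with $U(\pm1)=0$; by stability, the linearized operator $L:=-\Delta-\la f'(U)$ has strictly positive principal eigenvalue on every smooth subdomain of $\mathcal S$.

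\textbf{Construction of $\Om_\e$.} I would fix once and for all a smooth $\psi:\R^N\to[0,1/2]$, symmetric under every $x_j\mapsto-x_j$, compactly supported in $[-A,A]^N$ for some $A>0$, whose restriction to the $x_1$-axis admits exactly $k$ strict local minima $p_1,\dots,p_k$ at which $\psi$ vanishes, separated by strict local maxima at positive heights. Set $\phi_\e(x):=1-\e\psi(x)$ and take $\Om_\e$ to be a smooth bounded domain, symmetric under each $x_j\mapsto-x_j$ and $y\mapsto-y$, satisfying $\Om_\e\subset\mathcal S$, $\Om_\e\cap\bigl([-A,A]^N\times\R\bigr)=\{(x,y):|y|<\phi_\e(x)\}$, and $\Om_\e\supset\{(x,y):|x|<R_\e,\,|y|<1-2\e\}$ with $R_\e\to+\infty$; a concave smooth cap for $|x|>A$ closes the domain and makes it star-shaped with respect to $\orig$ (the derivative of the cap is $O(\e)$, the origin lies deep inside). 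Property (i) is built in, (ii) follows from $R_\e\to\infty$ and $\e\psi\to 0$, and (iii) comes from $\Om_\e\subset\mathcal S$ and the domain monotonicity $\la^*(\Om_\e)\ge\la^*(\mathcal S)=\la^*(-1,1)$, the last identity holding because the extremal stable solution on the strip is precisely $U$.

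\textbf{Convergence and linearization.} For $\la<\la^*(-1,1)\le\la^*(\Om_\e)$ the minimal stable solution $u_\e$ exists with $0\le u_\e\le U$ by sub-/supersolution. Elliptic regularity plus this uniform $L^\infty$ bound give $C^2_{loc}$-compactness; any cluster point is a bounded stable solution on $\mathcal S$ vanishing on $\partial\mathcal S$, hence one-dimensional by moving planes in each $x_j$-direction together with uniqueness of the stable 1D solution, so $u_\e\to U$ in $C^2_{loc}(\mathcal S)$. Moving planes in the $y$-direction on $\Om_\e$ give $\partial_y u_\e<0$ for $y>0$, so all critical points of $u_\e$ lie on $\{y=0\}$; write $v_\e(x):=u_\e(x,0)$. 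Expanding $u_\e=U+\e w+o(\e)$ in $C^2_{loc}$ and imposing the Dirichlet condition on $|y|=\phi_\e(x)=1-\e\psi(x)$ linearizes, via $U(1-\e\psi)=-\e|U'(1)|\psi+o(\e)$, to $w(x,\pm1)=-|U'(1)|\psi(x)$ with $Lw=0$ in $\mathcal S$. Since the boundary datum is $\le 0$ and strictly negative on $\{\psi>0\}$, the strong maximum principle for $L$ yields $w<0$ in $\mathcal S$; a Green's-function comparison for $L$, exploiting the non-degenerate extremum structure of $\psi$, gives $w(p_i,0)>w(q_i,0)$ for consecutive minima $p_i$ and intermediate maxima $q_i$. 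Hence $v_\e(p_i)-v_\e(q_i)=\e\bigl(w(p_i,0)-w(q_i,0)\bigr)+o(\e)>0$ for $\e$ small, producing at least $k$ strict local maxima of $v_\e$ along the $x_1$-axis; by the residual reflections $x_j\mapsto-x_j$ ($j\ge 2$) and $y\mapsto-y$, each such point is a critical point of $u_\e$ on $\Om_\e$.

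\textbf{Main obstacle.} The decisive step is the quantitative ordering $w(p_i,0)>w(q_i,0)$: $C^2_{loc}$-convergence of $u_\e$ to $U$ alone gives only $v_\e(p_i)-v_\e(q_i)=o(1)$, which is too weak to sign. Producing an $O(\e)$ gap of the correct sign requires a careful analysis of the linear Dirichlet problem for $L$ on the strip with datum $-|U'(1)|\psi$, combining positivity of $L$ (granted by stability of $U$, hence convexity of $f$) with the non-degenerate local structure of $\psi$ at each $p_i$ and $q_i$. Once this is established, the conclusion is automatic; everything else is either standard elliptic theory or built into the construction of $\Om_\e$.
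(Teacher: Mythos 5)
Your proposal takes a genuinely different route from the paper, and the gap you yourself flag is in fact fatal to the argument as written; moreover there is a second, unacknowledged gap that the paper is specifically designed to avoid.

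The paper does not construct a corrugated domain and then analyze the solution on it. Instead it chooses an explicit function
\[
u_0+\e\phi,\qquad \phi(x,y)=\sum_{j=1}^N\sum_{i=1}^n\alpha_i\cosh(\sqrt{\mu_i}x_j)\,\omega_i(y),
\]
where $\phi$ solves the linearized equation $-\Delta\phi=\la f'(u_0)\phi$ exactly, and \emph{defines} $\Om_\e$ as the connected component of $\{u_0+\e\phi>0\}$ containing the origin. With this choice the Dirichlet condition for $u_0+\e\phi$ on $\partial\Om_\e$ is exact, the residual $-\Delta(u_0+\e\phi)-\la f(u_0+\e\phi)$ is $O(\e^2)$ by convexity of $f$, and the $k$ nondegenerate interior maxima are engineered directly into $\phi$ (Lemma~\ref{lemmacosh}). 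The heart of the proof is then the chain Lemma~\ref{lemmaconvergenza1bis}--\ref{lemmaconvergenza2bis}--\ref{lemmaconvo(e)} establishing $u_\e=u_0+\e\phi+O(\e^2)$ on compacts, which, because the maxima of $u_0+\e\phi$ are nondegenerate with Hessian of order $\e$ in the $x$-directions, persists for $u_\e$. Note that the frequencies $\sqrt{\mu_i}$ are deliberately kept below $\sqrt{\mu_0}$, see~\ref{scelta:mu} and Lemma~\ref{lemma1}: these are precisely the modes that do not decay toward $y=0$ and for which the comparison operator on $\Om_\e$ retains a positive first eigenvalue, which is what powers all the maximum-principle barriers.

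Your scheme has two concrete gaps. First, the expansion $u_\e=U+\e w+o(\e)$ in $C^2_{loc}$ with $w$ solving $Lw=0$ and $w(x,\pm1)=-|U'(1)|\psi(x)$ is asserted, not proved; the $C^2_{loc}$ compactness and moving-plane rigidity you cite give only $u_\e\to U$, i.e.\ an $o(1)$ statement, and upgrading this to a sharp first-order expansion with the correct boundary datum is precisely the technical content that the paper's Lemmas~\ref{lemmaconvergenza2bis} and~\ref{lemmaconvo(e)} supply (and which requires convexity of $f$ and the spectral gap in an essential way). Second, and more structurally, the boundary-to-centerline transfer $w(p_i,0)>w(q_i,0)$ is not a consequence of positivity of $L$ alone. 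Separating variables shows that $w(x,0)$ is a low-pass-filtered version of $-|U'(1)|\psi$: modes of $\psi$ with transverse frequency above $\sqrt{\mu_0}$ are exponentially suppressed on $\{y=0\}$. A generic compactly supported $\psi$ with $k$ closely spaced minima in a fixed window $[-A,A]$ is dominated by high frequencies, so after filtering the oscillation pattern on $\{y=0\}$ can be completely washed out and the desired sign ordering can fail. To make your approach work you would have to build $\psi$ out of frequencies strictly below $\sqrt{\mu_0}$ with the right coefficient structure, which is exactly the content of the paper's choice of $\cosh(\sqrt{\mu_i}x_j)\omega_i(y)$ and of Lemma~\ref{lemmacosh}. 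In short, the paper's ``function first, domain second'' construction is not merely a convenience: it is what converts your two open steps into exact identities and explicit barriers.
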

Let us give an idea of Theorem~\ref{THM1}. The assumptions on $f$ imply that there exists a \emph{stable} solution $u_0$ of the following ODE
\[
\begin{cases}
-u''=\lambda f(u)&\hbox{in }(-1,1)\\
u>0&\hbox{in }(-1,1)\\
u(\pm1)=0.
\end{cases}
\]
Next, for a small $\sigma>0$ let us extend $u_0$ to a slightly larger interval $(-1-\sigma,1+\sigma)$ and denote by $\phi:\R^{N+1}\to\R$ a suitable solution of the following PDE
\begin{equation}\label{i4}
-\Delta v=\lambda f'(u_\si(y))v,\quad\text{in }\R^N\times\left(-1-\sigma ,1+\sigma \right).
\end{equation}
Of course~\ref{i4} can be solved using the classical separation of variables method. 

Our domain $\Om_\e$ will be  the connected component of $\set{u_\si+\e\phi>0}$ containing the origin and the solution $u_\e$ the \emph{stable} solution to~\ref{PB0} with $\Om=\Om_\e$. Finally we show that $u_\e$ is close to $u_0+\e\phi$ on the compact sets of $\Omega_\e$ and, since it will be proved that this last function admits $k$ nondegenerate critical points then $(iv)$ follows.

The proof of Theorem~\ref{THM1} will be given in Section~\ref{SEZ2}. We point out that it is possible to prove a slight more general result for problem~\ref{PB1} without assuming~\ref{i3}, see Remark~\ref{rmk:general}.

It is important to remark that our construction only works for {\em stable} solutions to~\ref{PB1}. 
 Indeed, even for the case of the first eigenfunction of the laplacian (where the first eigenvalue of the linearized problem is \emph{zero}), we are not able to construct a domain $\Om_\e$ as in Theorem~\ref{THM1}. This will be discussed in Remark~\ref{R}.
We do not know if in this case there exists a pair $(\Om_\e,u_\e)$ as in Theorem~\ref{THM1}.

Next let us discuss the role of the curvature of $\partial\Om$ for solutions to~\ref{PB1} in higher dimensions. We will focus on the particular case of the torsion problem, i.e. $g\equiv1$ in~\ref{PB1}. By Makar-Limanov's result if $N=2$ and the curvature of $\partial\Om$ is positive then the solution $u$ admits exactly one critical point (see~\cite{dgm} if the curvature vanishes somewhere). So a question naturally arises:

{\em if $N\ge3$ what is a sufficient condition on $\partial\Om$ which implies the uniqueness of the critical point?}

We point out that even for the torsion problem this is an open problem. In the second part of this paper we give a contribution to this question showing that the  {\em positive  mean curvature} of $\partial\Om$ is not the correct extension to higher dimensions.

Indeed, for any $k\ge2$, we will construct a domain $\Omega_\e\subseteq\R^N$ with $N\ge3$ and positive mean curvature and a solution $u_\e$ of the torsion problem in $\Omega_\e$ such that $u_\e$ has at least $k$ critical points. Actually we suspect that the correct condition which implies the uniqueness of the critical point for the solution of the torsion problem is that all {\em principal curvatures} are positive. However we have no result to support this idea.

The construction of the pair $(\Om_\e,u_\e)$ is similar to the one in Theorem~\ref{THM1}, but $\Om_\e$ turns to be a suitable perturbation of a cylinder $\mathcal C=\set{(x,y)\in\R\times\R^{N}| \abs{y}^2<1}$ for $N\ge2$. The result is the following,
\begin{thm}
\label{THM2}
Let $N\ge2$. For any $k\in\N$ there exists a family of smooth and bounded domain $\Omega_\e\subseteq\R^{N+1}$ and smooth positive functions $u_\e:\Omega_\e\to\R$ such that
\begin{enumerate}[(i)]
\item $\Omega_\e$ is starshaped with respect to an interior point;
\item $\Omega_\e$ locally converges to the cylinder $\mathcal C$ for $\e\to0$, i.e. for all compact set $K\subseteq\R^{N+1}$ it holds $\abs{K\cap(\mathcal C\Delta\Omega_\e)}\to0$ as $\e\to0$;
\item the mean curvature of $\partial\Omega_\e$ is positve;
\item $u_\e$ solves the torsion problem
\[
\begin{cases}
-\Delta u=1&\hbox{in }\Omega_\e\\
u=0&\hbox{on }\partial\Omega_\e;
\end{cases}
\]
\item $u_\e$ has at least $k$ nondegenerate maximum points.
\end{enumerate}
\end{thm}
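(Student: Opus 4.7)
The construction follows the blueprint of Theorem~\ref{THM1}, taking $g\equiv 1$ and replacing the strip by the cylinder. For a small $\sigma>0$, let
\[
u_\sigma(y)=\frac{(1+\sigma)^2-|y|^2}{2N},\qquad y\in \overline{B^N(0,1+\sigma)},
\]
be the torsion function on the enlarged ball, extended trivially in $x$ to the enlarged cylinder $\R\times B^N(0,1+\sigma)$. Since $g'\equiv 0$, the linearised equation at $u_\sigma$ is just $-\Delta\phi=0$, so it suffices to choose a harmonic perturbation. Using separation of variables I would take
\[
\phi(x,y)=\cos(\mu x)\,V_1(|y|)-\delta\cosh(\nu x)\,V_2(|y|),
\]
where $V_1,V_2$ are the smooth positive radial solutions on $[0,1+\sigma]$ of the modified / regular Bessel-type ODEs
\[
V_1''+\tfrac{N-1}{r}V_1'=\mu^2 V_1,\qquad V_2''+\tfrac{N-1}{r}V_2'=-\nu^2 V_2,
\]
with $\nu$ small enough that $V_2>0$ on the whole interval. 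Both summands of $\phi$ are then harmonic on $\R\times B^N(0,1+\sigma)$.

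Define $\Omega_\e$ as the connected component of $\{u_\sigma+\e\phi>0\}$ containing the origin. The $\cosh$ term forces $u_\sigma+\e\phi\to-\infty$ as $|x|\to\infty$ for any $|y|$ bounded away from $1+\sigma$, so $\Omega_\e$ is bounded with $x$-extent of order $\nu^{-1}\log(1/(\e\delta))\to+\infty$; choosing $\sigma=\sigma(\e)\to 0$ and $\delta=\delta(\e)$ appropriately then yields local convergence to $\mathcal C$. The crucial algebraic simplification compared to Theorem~\ref{THM1} is that
\[
-\Delta(u_\sigma+\e\phi)=-\Delta u_\sigma-\e\Delta\phi=1\quad\text{exactly,}
\]
so setting $u_\e:=(u_\sigma+\e\phi)\big|_{\Omega_\e}$ one has $-\Delta u_\e=1$ in $\Omega_\e$ and $u_\e=0$ on $\partial\Omega_\e$; by uniqueness of the torsion function this \emph{is} $u_\e$, with no perturbation argument required.

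To count critical points, search along the axis $\{y=0\}$: radial symmetry and smoothness of $V_1,V_2$ give $\nabla_y u_\sigma(0)=\nabla_y\phi(x,0)=0$, so the whole axis is free of $y$-gradient. The $x$-derivative on the axis reduces to
\[
\partial_x\phi(x,0)=-\mu V_1(0)\sin(\mu x)-\delta\nu V_2(0)\sinh(\nu x),
\]
which, for $\delta$ small, has at least $k$ zeros of alternating local type before $\cosh(\nu x)$ shuts the domain off. A Hessian computation — using $V_i''(0)=\mu^2 V_1(0)/N$ and $V_2''(0)=-\nu^2 V_2(0)/N$ extracted from the ODEs at $r=0$ — shows the $y$-block to be $-(1+O(\e))I_N/N$, negative definite, while the $x$-block alternates in sign, yielding $k$ nondegenerate local maxima of $u_\e$ as required.

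It remains to check the geometric items. Symmetry is built in by construction. Starshapedness with respect to the origin follows from $\nabla F\cdot(x,y)\le 0$ on $\partial\Omega_\e$ with $F=u_\sigma+\e\phi$: on the lateral part the $-|y|^2/N$ coming from $u_\sigma$ dominates, whereas on the end caps the contribution $-\e\delta\nu\, x\sinh(\nu x)V_2(|y|)$ is strongly negative because $x\sinh(\nu x)>0$ and $\cosh(\nu x)\gg 1$ there. For the mean curvature, plugging $\Delta F=-1$ into
\[
H=-\mathrm{div}\!\left(\frac{\nabla F}{|\nabla F|}\right)=\frac{1}{|\nabla F|}+\frac{\nabla F\cdot\nabla|\nabla F|}{|\nabla F|^2}
\]
gives $H=(N-1)/(1+\sigma)+O(\e)>0$ on the lateral portion (this is where the hypothesis $N\ge 2$ enters), while on the end caps, where the $\cosh$ term dominates, $\partial\Omega_\e$ is a small perturbation of a strictly convex ellipsoidal shell, so $H>0$ by convexity. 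The main obstacle is tuning $\sigma,\delta,\mu,\nu$ (each possibly depending on $\e$) so that existence of the $k$ critical points, starshapedness, and — most delicately — uniform positivity of $H$ across the transition between lateral boundary and end caps all hold together; everything else is essentially a transcription of the proof of Theorem~\ref{THM1}.
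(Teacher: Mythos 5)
Your overall blueprint — take the torsion function on a ball, extend it constantly in $x$, add a small harmonic perturbation $\e\phi$, and let $\Omega_\e$ be the connected component of $\{u_0+\e\phi>0\}$ so that $u_\e=u_0+\e\phi$ is the torsion function with no perturbation argument — is exactly the paper's strategy. But your specific choice of $\phi$ differs in a way that undermines the technical core, and you leave the decisive step unproved.

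The paper takes $\phi(x,y)=\sum_{j=1}^N v(x,y_j)$ with $v(t,s)=\Re\bigl(-\prod_{\ell=1}^k(t-t_\ell+is)(t+t_\ell+is)\bigr)$, i.e.\ the real part of an explicit polynomial, \emph{summed over the coordinates} $y_1,\dots,y_N$ rather than taken radially. This has two consequences you lose with $\phi=\cos(\mu x)V_1(|y|)-\delta\cosh(\nu x)V_2(|y|)$. First, the polynomial is monotone for large $|t|$, so there is no oscillation on the lateral boundary; your $\cos(\mu x)$ introduces wiggles in the boundary radius, and controlling the resulting $\e\mu^2$ contributions to $F_{xx}$ against the $(N-1)/(1+\sigma)$ baseline curvature (while simultaneously letting the $x$-extent grow enough to fit $k$ periods) is exactly the kind of tuning you only gesture at. Second — and this is the sharper issue — the coordinate-wise sum structure makes $\partial_{y_iy_j}u_\e=0$ for $i\neq j$, which is the standing hypothesis of the mean-curvature formula in Lemma~\ref{lemma:curv1}. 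A radial $V_i(|y|)$ has $\partial_{y_iy_j}V_i\neq 0$ off the axes, so that formula simply does not apply to your construction; you would need to rederive the curvature identity without the diagonal Hessian assumption, which is a nontrivial additional step you do not mention.

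Most importantly, the positivity of the mean curvature is the content of the theorem, and your proposal ends by conceding that ``the main obstacle is tuning $\sigma,\delta,\mu,\nu$ \dots so that \dots uniform positivity of $H$ across the transition \dots all hold together.'' That is not a minor remaining verification: the transition region between the lateral part ($|y|$ bounded away from $0$) and the end caps ($|y|\to 0$, $|x|\to\infty$) is precisely where the paper invests the most effort, using the sharp boundary asymptotics $|x^\e|=(2\e)^{-1/(2k)}(1+o(1))$ from Lemma~\ref{lemma:torsione1} together with the explicit polynomial growth rates of $v$ and its derivatives to close the estimate in Lemma~\ref{lemma:torsione5}. Without a corresponding analysis (and without a curvature formula valid for your $\phi$), the proof is genuinely incomplete at its most delicate point.
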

\begin{figure}[ht]
\centerline {\includegraphics[width=10cm]{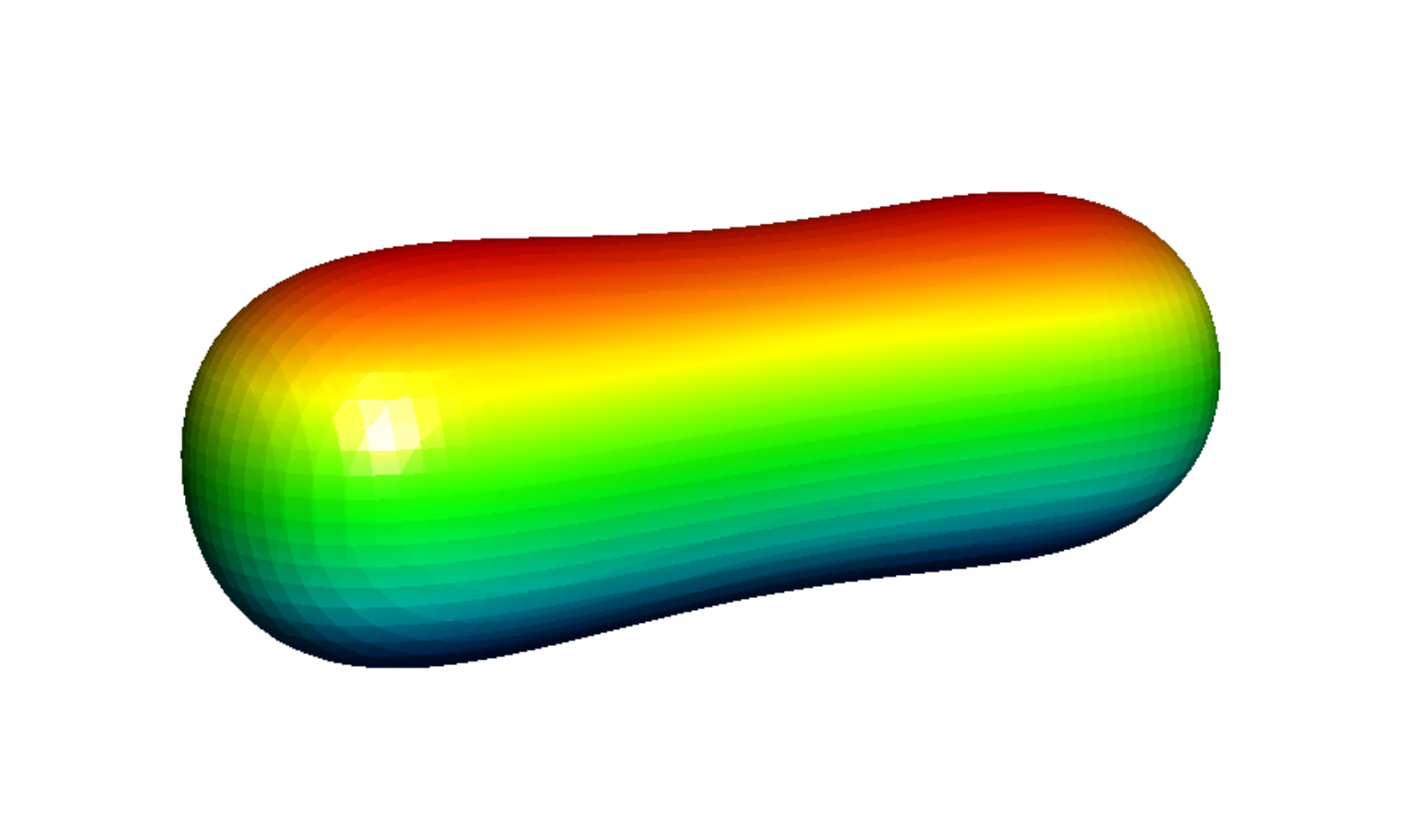}}
\caption{The domain $\Omega_\e$ in Theorem~\ref{THM2}  for $N=2$ and $k=2$.}
\end{figure}
As in Theorem~\ref{THM1} we have that $u_\e=u_0+\e\phi$ where $u_0=\frac{1}{2N}\left(N-\abs{y}^2\right)$ is a solution of the torsion problem in the unit ball in $\R^N$ and $\phi$ turns to be an harmonic function in the whole $\R^{N+1}$.

Then we take $\Omega_\e$ as in Theorem~\ref{THM1}, while our solution will directly be $u_\e=u_0+\e\phi$.
Since the set $\Omega_\e$ turns out to be a small perturbation of the cylinder $\mathcal C$, which boundary has positive mean curvature, then $(iii)$ of Theorem~\ref{THM2} follows. Note that, unlike as in Theorem~\ref{THM1}, here the pair $(\Om_\e,u_\e)$ is explicitly computed.

Theorem~\ref{THM2} will be proved in Section~\ref{SEZ4}. Finally the Appendix is devoted to the detailed proof of some claims in Section~\ref{SEZ2} and Section~\ref{SEZ4}.

\section{Proof of Theorem~\ref{THM1}}
\label{SEZ2}
In this section we take $x=(x_1,\dots,x_N)\in\R^N$ and $y\in\R$ and we assume the hypothesis of Theorem~\ref{THM1}.
The proof works as follows: we first construct a suitable domain $\Omega_\e$ which verifies the claim of Theorem \ref{THM1} and next we prove that the stable solution of~\ref{PB0} satisfies the claim $(iv)$ in Theorem~\ref{THM1}.

The first step in the construction of the domain $\Omega_\e$ is to introduce a solution $u_\si$ of the $1$-dimensional problem
\begin{equation}
\label{PB2}
\begin{cases}
-u''=\lambda f(u)&\hbox{in }(-1,1)\\
u>0&\hbox{in }(-1,1)\\
u(\pm1)=0.
\end{cases}
\end{equation}

By the assumption on $f$ such a solution exists and by elementary argument it can be extended to verify 
\[
\begin{cases}
-u''=\lambda f(u)&\hbox{in }(-1-\sigma,1+\sigma)\\
u>0&\hbox{in }(-1,1)\\
u(\pm1)=0\\
u<0&\hbox{in }[-1-\sigma,-1)\cup(1,1+\sigma]\\
\end{cases}
\]
for $\sigma>0$ and small. We again denote by $u_0$ this extension.

Since $u_0$ is a stable solution we have that the first eigenvalue of the linearized operator
\begin{equation}
\label{eq:eigenv}
-\frac{\mathrm{d}^2}{\mathrm{d}y^2}-\lambda f'(u_0(y)),
\end{equation}
in $(-1,1)$ with Dirichlet boundary conditions is strictly positive. Then, up to choose a smaller $\sigma$, also the first eigenvalue of~\ref{eq:eigenv} in $(-1-\sigma,1+\sigma)$ is strictly positive. We denote it by $\mu_0$.

Next ingredient in the construction of $\Omega_\e$ involves a
solution of a suitable linearized problem in the strip $\R^N\times(-1-\sigma,1+\sigma)$. To do this we need to study the following ODE.
\begin{lemma}
\label{lemma:costruzione:omegai}
For $\mu\in(0,\mu_0)$ there exists a solution $\omega_\mu$ of the ordinary equation
\[
\begin{cases}
-\omega''-\lambda f'(u_\si(y))\omega=\mu\omega\quad\text{in }(-1-\sigma,1+\sigma)\\
\omega_\mu(0)=1
\end{cases}
\]
such that
\begin{enumerate}[(i)]
\item $\omega_\mu>0$ in $[-1-\sigma,1+\sigma]$,
\item $\omega_\mu$ is symmetric with respect to $0$,
\item $y\omega_\mu'(y)<0$ for all $y\not=0$.
\end{enumerate}
\end{lemma}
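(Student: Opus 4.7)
The plan is to define $\omega_\mu$ as the unique solution of the Cauchy problem for the linear ODE with initial data $\omega_\mu(0)=1$ and $\omega_\mu'(0)=0$. Since the coefficient $\lambda f'(u_\si(y))+\mu$ is smooth and bounded on $[-1-\sigma,1+\sigma]$, standard ODE theory gives existence and uniqueness of a $C^2$ solution on the whole interval. For property (ii), I would observe that by the construction of $u_\si$ (symmetric solution of~\ref{PB2} extended symmetrically), the function $y\mapsto f'(u_\si(y))$ is even; hence $\tilde\omega(y):=\omega_\mu(-y)$ solves the same equation with the same initial data at $0$, and uniqueness yields $\omega_\mu(-y)=\omega_\mu(y)$.

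For property (iii), I would use the ODE rewritten as $\omega_\mu''=-(\lambda f'(u_\si)+\mu)\,\omega_\mu$. Assumption~\ref{i2} ensures $f'\ge 0$, and $\mu>0$; hence wherever $\omega_\mu>0$, we get $\omega_\mu''<0$, i.e.\ strict concavity. Combined with $\omega_\mu'(0)=0$, this forces $\omega_\mu'(y)<0$ for $y>0$ (and by evenness $\omega_\mu'(y)>0$ for $y<0$) throughout the region where positivity holds. Thus (iii) follows once (i) is established, and in fact one should set
\[
y^\ast:=\sup\bigl\{y\in(0,1+\sigma]\,:\,\omega_\mu>0\text{ on }[0,y]\bigr\}
\]
and deduce, via the concavity argument above, that $\omega_\mu$ is strictly decreasing on $[0,y^\ast)$.

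The key step is property (i), which I would prove by contradiction using the eigenvalue characterization of $\mu_0$. Suppose $y^\ast\le 1+\sigma$ and $\omega_\mu(y^\ast)=0$. Then, by evenness, $\omega_\mu$ is a strictly positive solution on the open interval $(-y^\ast,y^\ast)$ vanishing at the endpoints, so it is (up to scaling) a first Dirichlet eigenfunction of the operator in~\ref{eq:eigenv} on $(-y^\ast,y^\ast)$ with eigenvalue $\mu$. Monotonicity of the first Dirichlet eigenvalue with respect to domain inclusion gives
\[
\mu=\mu_1\bigl(-y^\ast,y^\ast\bigr)\;\ge\;\mu_1\bigl(-1-\sigma,1+\sigma\bigr)=\mu_0,
\]
contradicting $\mu<\mu_0$. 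Hence no interior zero occurs and $\omega_\mu(1+\sigma)>0$, yielding (i).

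The main obstacle is arranging this eigenvalue comparison cleanly, because one must know that $\omega_\mu$ is automatically a \emph{first} eigenfunction on the shrunken interval; this is what the positivity on $(-y^\ast,y^\ast)$ (obtained from the concavity argument, which itself only uses positivity up to $y^\ast$) provides, making the argument non-circular. Once (i) is in place, properties (ii) and (iii) drop out immediately from the symmetric Cauchy data and strict concavity.
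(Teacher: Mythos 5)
Your proof is correct, but it takes a genuinely different route from the paper. The paper defines $\omega$ as the solution of the \emph{boundary value} problem with $\omega(\pm(1+\sigma))=1$ and normalizes afterwards; since $\mu<\mu_0$ the operator $-\frac{\mathrm d^2}{\mathrm dy^2}-\lambda f'(u_0)-\mu$ has a positive first Dirichlet eigenvalue, so the maximum principle applies and yields positivity immediately, and also yields uniqueness of the BVP solution, which together with the evenness of the equation gives the symmetry $(ii)$. You instead define $\omega_\mu$ via the \emph{initial value} problem $\omega_\mu(0)=1$, $\omega_\mu'(0)=0$; you obtain symmetry directly from uniqueness of the Cauchy problem, and you obtain positivity via a contradiction using domain monotonicity of the first Dirichlet eigenvalue: if $\omega_\mu$ had a first zero $y^\ast\le 1+\sigma$, evenness would make it a positive Dirichlet eigenfunction on $(-y^\ast,y^\ast)$ with eigenvalue $\mu$, forcing $\mu\ge\mu_0$. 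The concavity argument for $(iii)$ is the same in both. Your IVP approach is self-contained and avoids invoking solvability of the BVP, at the price of the extra eigenvalue-comparison step; the paper's BVP approach gets positivity in one line from the maximum principle, at the price of having to justify existence and uniqueness of the boundary value problem (which again rests on $\mu<\mu_0$). Both are sound, and both ultimately rest on the same fact that the linearized operator is coercive below $\mu_0$.
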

\begin{proof}
Fix $\mu\in(0,\mu_\si)$ and let $\omega$ be the solution of
\[
\begin{cases}
-\omega''-\lambda f'(u_\si(y))\omega=\mu\omega\quad\text{in }(-1-\sigma,1+\sigma),\\
\omega(\pm(1+\sigma))=1.
\end{cases}
\]
Since $\mu<\mu_\si$, by the maximum principle we know that $\omega>0$ in $(-1-\sigma,1+\sigma)$. Taking into account the symmetry of $u_\si$ and the maximum principle we get that $\omega(y)=\omega(-y)$  and then $(ii)$ follows.

 Moreover, from $f'\ge0$ we deduce $\omega''<0$ in $[-1-\sigma,1+\sigma]$ and then $0$ turns out to be a maximum point. The strictly concavity of $\omega$ tells also that $\omega'(y)<0$ for $y>0$ and, together to the symmetry of the function, this yields $(iii)$.
To conclude the proof set $\omega_\mu=\omega/\omega(0)$.
\end{proof}
\subsection{Construction of the domain $\Om_\e$}\label{ss1}
Now, for some $n=n(k)\in\N$, let $\mu_1,\dots,\mu_n\in\R$ be such that
\begin{equation}
\label{scelta:mu}
\frac{\mu_0}{4}>\mu_1>\dots>\mu_n>0,
\end{equation}
and for $i=1,\dots,n$
\[
\omega_i(y)=\omega_{\mu_i}(y),\quad y\in(-1-\sigma,1+\sigma),
\]
the function given by Lemma~\ref{lemma:costruzione:omegai}.  From now on, we consider $\sigma$ fixed.

Given $(t,y)\in\R\times\left(-1-\sigma ,1+\sigma \right)$, we define
\[
\tilde\phi(t,y)=\sum_{i=1}^n\alpha_i\cosh(\sqrt{\mu_i}t)\omega_i(y),
\]
for some $\alpha_i\in\R$ which will be fixed later. A straightforward computation shows that $\tilde\phi$ is a solution of the linearized problem
\[
-\Delta v=\lambda f'(u_\si(y))v,\quad\text{in }\R\times\left(-1-\sigma ,1+\sigma \right).
\]
We set $\alpha_1=-1$ while we choose $\alpha_2,\dots,\alpha_n$ in such a way that the function $\tilde\phi(t,0)=\sum_{i=1}^n\alpha_i\cosh(\sqrt{\mu_i}t)$ has $k$ nondegenerate maximum points $t_1,\dots,t_k$.
We point out that it is always possible to do this, see Lemma~\ref{lemmacosh} in the Appendix for the details. Finally, for $(x_1,\dots,x_N,y)\in\R^N\times\left(-1-\sigma ,1+\sigma \right)$ we define
\begin{equation}\label{phi}
\boxed{\phi(x_1,\dots,x_N,y)=\sum_{j=1}^N\tilde\phi(x_j,y)=\sum_{j=1}^N\sum_{i=1}^n
\alpha_i\cosh(\sqrt{\mu_i}x_j)\omega_i(y)}
\end{equation}
which solves
\[
-\Delta v=\lambda f'(u_\si(y))v,\quad\text{in }\R^N\times\left(-1-\sigma ,1+\sigma \right).
\]

We point out that, for $\e$ small enough,
\[
u_0(0)+\e\phi(0,\dots,0,0)>0,
\]
and we denote by 
\begin{equation}
\label{def:Omegae}
\boxed{\Omega_\e\hbox{ the connected component of $\set{u_\si+\e\phi>0}$ containing the origin.}}
\end{equation}
The following lemma proves some properties of the set $\Omega_\e$. The proof follows~\cite{gg3}.

\begin{lemma}
\label{lemmaomegae}
The set $\Omega_\e$ satisfies the following properties.
\begin{enumerate}[(i)]
\item $\Omega_\e\subseteq R_\e$ for $\e$ small enough, with $R_\e=[-M_\e,M_\e]^N\times[-1-\eta,1+\eta]$ where
\[
M_\e=\frac{1}{\sqrt{\mu_1}}\log\left(\frac{3\norm{u_0}_{L^\infty(-1-\eta,1+\eta)}}{\e \omega_1(1+\eta)}\right),
\]
and $\eta\in(0,\sigma)$ as small as we want.
\item $\Omega_\e\supseteq[t_1,t_{k}]^N\times\{0\}$.
\item Let $(x^\e,y^\e)\in\partial\Omega_\e$ for $\e$ small enough. Then, if $\abs{x^\e}\le C$ we have
\begin{equation}
\label{lemma2:eq1}
y^\e=\pm1+o(1),
\end{equation}
and if $\abs{x^\e}\to+\infty$ we have
\begin{equation}
\label{lemma2:eq2}
\sum_{j=1}^N\cosh(\sqrt{\mu_1}x_j^\e)=\frac{u_0(y^\e)}{\e\omega_1(y^\e)}(1+o(1)).
\end{equation}
In particular $\Omega_\e$ locally converges to the strip $\mathcal S=\R^N\times(-1,1)$ for $\e\to0$.
\item $\Omega_\e$ is symmetric with respect to the hyperplanes $x_j=0$ for $j=1,\dots,N$ and $y=0$. Moreover, it is a smooth and star-shaped domain with respect to the origin for $\e$ small enough.
\end{enumerate}
\end{lemma}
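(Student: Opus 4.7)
My plan is to prove the four parts of Lemma~\ref{lemmaomegae} in order, exploiting the block structure
\[
\phi(x_1,\dots,x_N,y)=\sum_{j=1}^N\tilde\phi(x_j,y),\qquad \tilde\phi(t,y)=\sum_{i=1}^n\alpha_i\cosh(\sqrt{\mu_i}t)\omega_i(y),
\]
and the fact that $\alpha_1=-1$ with $\mu_1>\mu_i$ for $i\ge2$, so that $\tilde\phi(t,y)\to-\infty$ as $|t|\to\infty$ uniformly in $y\in[-1-\sigma,1+\sigma]$; in particular $\phi$ is bounded from above on the whole $\R^N\times[-1-\sigma,1+\sigma]$. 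I fix some $\eta\in(0,\sigma)$ throughout.

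For $(i)$, on $\{|y|\ge 1+\eta\}$ we have $u_\si\le -c_\eta<0$, so $u_\si+\e\phi<0$ there for $\e$ small, giving the $y$-bound. For the $x$-bound I isolate the $i=1$ mode: using $\omega_1\ge\omega_1(1+\eta)$ on $[-1-\eta,1+\eta]$ (Lemma~\ref{lemma:costruzione:omegai}) and $\mu_i<\mu_1$, one obtains $\tilde\phi(t,y)\le -\tfrac{1}{2}\cosh(\sqrt{\mu_1}t)\omega_1(1+\eta)+C$ for $|t|$ large, so $u_\si+\e\phi<0$ as soon as $\e\cosh(\sqrt{\mu_1}|x_j|)\omega_1(1+\eta)\ge 3\|u_0\|_\infty$ for some $j$, which produces exactly the stated $M_\e$. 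For $(ii)$, since $u_\si(0)>0$ and $\phi(\cdot,0)$ is bounded on the compact box $[t_1,t_k]^N\times\{0\}$, we have $u_\si+\e\phi>0$ there for $\e$ small; the box is connected and, the $1$-D profile $\tilde\phi(\cdot,0)$ being even, its maxima satisfy $t_i=-t_{k+1-i}$, so $0\in[t_1,t_k]$, and hence the box belongs to the connected component $\Omega_\e$ of the origin.

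For $(iii)$, if $|x^\e|$ stays bounded then $\e\phi(x^\e,y^\e)\to 0$, so $u_\si(y^\e)\to 0$, forcing $y^\e\to\pm 1$ (the only zeros of $u_\si$ in $[-1-\sigma,1+\sigma]$), which is \eqref{lemma2:eq1}. If $|x^\e|\to\infty$, the fact that $\cosh(\sqrt{\mu_i}x)/\cosh(\sqrt{\mu_1}x)\to 0$ for $i\ge 2$ yields
\[
\phi(x^\e,y^\e)=-(1+o(1))\sum_{j=1}^N\cosh(\sqrt{\mu_1}x_j^\e)\omega_1(y^\e),
\]
and the boundary identity $u_\si(y^\e)=-\e\phi(x^\e,y^\e)$ gives \eqref{lemma2:eq2}. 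Local convergence to $\mathcal S$ then follows: on any compact $K$, $(i)$ confines $K\cap\Omega_\e$ to $|y|\le 1+\eta$ while the preceding squeezes $\partial\Omega_\e$ toward $\{y=\pm 1\}$, so $|K\cap(\mathcal S\Delta\Omega_\e)|\to 0$.

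For $(iv)$, symmetry is immediate because $u_\si$ is even in $y$ and each summand of $\phi$ is even both in $x_j$ (via $\cosh$) and in $y$ (via $\omega_i$, Lemma~\ref{lemma:costruzione:omegai}). For smoothness and star-shapedness I show $(x,y)\cdot\nabla(u_\si+\e\phi)<0$ on $\partial\Omega_\e$, which also gives $\nabla\ne 0$ and hence smoothness via the implicit function theorem. The key sign ingredients are $yu_\si'(y)<0$ on $(-1-\sigma,1+\sigma)\setminus\{0\}$ (from concavity of $u_\si$ on $(-1,1)$, $u_\si'(1)<0$ by Hopf, and $u_\si''(1)=-\lambda f(0)<0$ for $\sigma$ small), $y\omega_i'(y)<0$, and $x_j\sinh(\sqrt{\mu_1}x_j)>0$. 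Splitting $\partial\Omega_\e$ as in $(iii)$: for bounded $|x^\e|$, $y^\e u_\si'(y^\e)$ dominates the $O(\e)$ perturbation; for $|x^\e|\to\infty$, the leading $-\e\sqrt{\mu_1}\sum_j x_j^\e\sinh(\sqrt{\mu_1}x_j^\e)\omega_1(y^\e)$ of $\e\,x\cdot\nabla_x\phi$ dominates everything. The main obstacle will be precisely this uniform control across the two regimes in $(iii)$ and $(iv)$: one must verify that the sub-dominant modes $\mu_2,\dots,\mu_n$ and the $\omega_i'$-perturbation are genuinely negligible compared with the leading contribution, uniformly in $\e$, in order to patch the asymptotic expansions at the transition between bounded and large $|x^\e|$.
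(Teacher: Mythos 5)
Your proposal follows essentially the same route as the paper: (i) by forcing $u_0+\e\phi<0$ on $\partial R_\e$ using the dominance of the $\mu_1$-mode with $\alpha_1=-1$; (ii) by a smallness-of-$\e$ estimate on the compact box; (iii) directly from the boundary identity $u_\si(y^\e)=-\e\phi(x^\e,y^\e)$ with a bounded/unbounded dichotomy on $|x^\e|$; and (iv) by showing $(x,y)\cdot\nabla(u_\si+\e\phi)$ stays strictly negative on $\partial\Omega_\e$, again splitting into the two regimes and using $yu_\si'(y)<0$, $y\omega_i'(y)<0$, and $x_j\sinh(\sqrt{\mu_1}x_j)\ge 0$.

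One small slip in (ii): you argue connectedness from $t_i=-t_{k+1-i}$, so that the box $[t_1,t_k]^N\times\{0\}$ contains the origin. But the maxima produced in Lemma~\ref{lemmacosh} satisfy $0<t_1<\cdots<t_k$ (they are images under $\cosh^{-1}$ of the $\tau_i>1$), so the box does \emph{not} contain the origin. The correct fix is simpler: $\phi$ is continuous, hence bounded on the larger compact box $[-t_k,t_k]^N\times\{0\}$, so for $\e$ small $u_\si+\e\phi>0$ on that whole box, which is connected and contains both the origin and $[t_1,t_k]^N\times\{0\}$; thus the latter indeed lies in the connected component of the origin. (The paper is silent on this point, so your instinct to address it is right, just not via the symmetry of the $t_i$.) The uniform control you flag at the end is handled in the paper by a compactness/contradiction step: in the regime $|x^\e|\to\infty$ the upper bound reduces to $y^\e\partial_y u_0(y^\e)-\tfrac{\sqrt{\mu_1}}{4}u_0(y^\e)$, both summands nonpositive, and if this tended to $0$ then simultaneously $y^\e\to 0$ and $y^\e\to \pm1$, a contradiction; combining with the bounded-$|x^\e|$ regime over subsequences gives a uniform $-\alpha$.
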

\begin{proof}
In order to prove $(i)$ we show that $u_0+\e\phi<0$ on $\partial R_\e$. First let us consider the case where $x=(x_1,\dots,x_N)\in[-M_\e,M_\e]^N$ is such that $x_j=\pm M_\e$ for some $j=1,\dots,N$ and $y\in[-1-\eta,1+\eta]$. Hence, recalling~\ref{phi}, one has
\begin{align*}
u_0(y)+\e\phi(x,y)&\le\norm{u_0}_{L^\infty(-1-\eta,1+\eta)}-\e\frac{3\norm{u_0}_{L^\infty(-1-\eta,1+\eta)}}{\e}(1+o(1))\\
&\le-\norm{u_0}_{L^\infty(-1-\eta,1+\eta)}<0
\end{align*}
as $\e\to0$.\\
Next let $(x,y)\in\set{(x,y)\in\R^{N+1}|x\in[-M_\e,M_\e]^N,\,\,y=\pm(1+\eta)}$ and observe that since $\omega_i>0$ for $y\in[-1-\eta,1+\eta]$ for all $i=1,\dots,n$ and $\alpha_1=-1$ we get
\[
\sup_{\R^N\times[-1-\eta,1+\eta]}\phi=C\in\R.
\]
Finally, we have
\[
u(x,y)\le u_0(\pm(1+\eta))+C\e<\frac{u_0(\pm(1+\eta))}{2}<0,
\]
for $\e$ small enough. Then $(i)$ follows.

Concerning $(ii)$, if $\e$ satisfies
\[
\e N\max_{t\in[t_1,t_{k}]}\left[\sum_{i=1}^n\alpha_i\cosh(\sqrt{\mu_i}t)\right]^-<\frac{u_0(0)}{2},
\]
where $[\,\,\cdot\,\,]^-$ denotes the negative part, then we get
\[
u_0+\e\phi\ge u_0-\e\phi^->\frac{u_0(0)}{2},
\]
and so $[t_1,t_{k}]^N\times\{0\}\subseteq\Omega_\e$.

To prove $(iii)$ note that from $u(x^\e,y^\e)=0$ on $\partial\Omega_\e$ we have
 \[
 u_0(y^\e)=-\e\phi(x^\e,y^\e).
 \]
If $\abs{x^\e}\le C$ then $\phi$ is uniformly bounded with respect to $\e\to0$ and then $u_0(y^\e)\to0$ which yields to $y^\e\to\pm1$.\\
On the other hand, if $\abs{x^\e}\to+\infty$ we have (recall that $\alpha_1=-1$)
\begin{equation}
\label{eq:y}
-u_0(y^\e)=-\e\sum_{j=1}^N\cosh(\sqrt{\mu_1}x_j^\e)\omega_1(y^\e)(1+o(1)),
\end{equation}
which gives~\ref{lemma2:eq2}. Moreover, since the right hand side of equation~\ref{eq:y} is strictly negative we get $u_0(y^\e)>0$ that implies $\abs{y^\e}\le1$.

The symmetry properties of the domain immediately follow from the ones of $\phi$ and $u_\si$. To show the star-shapeness with respect to the origin, it is enough to prove that there exists $\alpha>0$ such that
\[
y\partial_yu_\si(y)+\e \sum_{j=1}^Nx_j\partial_{x_j}\phi(x,y)+\e y\partial_y\phi(x,y)\le-\alpha<0,\quad\text{for all }(x,y)\in\partial\Omega_\e.
\]
Since $u_\si$ solves~\ref{PB2} we have that
\[
y\partial_yu_\si(y)<0,\quad\text{in }R_\e\setminus\set{y=0}.
\]
If $(x^\e,y^\e)\in\partial\Omega_\e$ is such that $\abs{x^\e}\le C$ as $\e\to0$, then from~\ref{lemma2:eq1} follows
\[
y^\e\partial_{y}u_\si(y^\e)=\pm\partial_{y}u_{0}(\pm1)(1+o(1))=\partial_{y}u_{0}(1)(1+o(1))<0.
\]
In this case, since the derivatives of $\phi$ are uniformly bounded with respect to $\e$, it easily follows
\begin{align*}
y\partial_{y}u_{0}(y^\e)+\e  \sum_{j=1}^Nx^\e_j\partial_{x_j}\phi(x^\e,y^\e)+\e y\partial_{y}\phi(x^\e,y^\e)&=\partial_{y}u_{0}(1)(1+o(1))+O(\e)\\
	&\le\frac{1}{2}\partial_{y}u_{0}(1)<0,
\end{align*}
for $\e$ small enough.\\
On the other hand, if $\abs{x^\e}\to+\infty$, let $\{j_1,\dots,j_m\}\subseteq\{1,\dots,N\}$ be such that $\abs{x_j}\to+\infty$ if and only if $j=j_h$ for some $h=1,\dots,m$. Then one gets
\begin{align}
&y^\e\partial_{y}u_{0}(y^\e)+\e  \sum_{j=1}^Nx^\e_j\partial_{x_j}\phi(x^\e,y^\e)+\e y^\e\partial_{y}\phi(x^\e,y^\e)\nonumber\\
&=\!y^\e\partial_{y}u_{0}(y^\e)\!+\!\e\!\sum_{j=1}^N\sum_{i=1}^n\!\alpha_i\!\left(\sqrt{\mu_i}x_j^\e\sinh(\sqrt{\mu_i}x_j^\e)\omega_i(y^\e)\!+\!\cosh(\sqrt{\mu_i}x_j^\e)y^\e\partial_{y}\omega_i(y^\e)\right)\nonumber\\
&\label{b0}\le y^\e\partial_{y}u_{0}(y^\e)-\frac{\e}{2}\sum_{h=1}^m\sqrt{\mu_1}x_{j_h}^\e\sinh(\sqrt{\mu_1}x_{j_h}^\e)\omega_1(y^\e)\big((1+o(1)\big).
\end{align}
For $h=1,\dots,m$ we have that $-x_{j_h}\sinh(\sqrt{\mu_1}x_{j_h})\le-\cosh(\sqrt{\mu_1}x_{j_h})$ and then
\begin{align*}
-\sum_{h=1}^mx_{j_h}^\e\sinh(\sqrt{\mu_1}x_{j_h}^\e)\big((1+o(1)\big)&\le-\sum_{h=1}^m\cosh(\sqrt{\mu_1}x_{j_h}^\e)\big((1+o(1)\big)\\\
&=-\sum_{j=1}^N\cosh(\sqrt{\mu_1}x_j^\e)\big((1+o(1)\big).
\end{align*}
So we have that~\ref{b0} becomes
\begin{align*}
y^\e\partial_{y}u_{0}(y^\e)&+\e\sum_{j=1}^Nx^\e_j\partial_{x_j}\phi(x^\e,y^\e)+\e y^\e\partial_{y}\phi(x^\e,y^\e)\\
&\le\,\, y^\e \partial_{y}u_{0}(y^\e)-\frac{\e}{2}\sqrt{\mu_1}\omega_1(y^\e)\sum_{j=1}^N\cosh(\sqrt{\mu_1}x_j^\e)(1+o(1))\\
        &\!\!\!\!\overset{\ref{lemma2:eq2}}{\le}\!y^\e\partial_{y}u_{0}(y^\e)-\frac{\sqrt{\mu_1}}{2}u_0(y^\e)(1+o(1))\\
&\le\,\,  y^\e\partial_{y}u_{0}(y^\e)-\frac{\sqrt{\mu_1}}{4}u_0(y^\e),
\end{align*}
and if $y^\e\partial_{y}u_{0}(y^\e)-\frac{\sqrt{\mu_1}}{4}u_0(y^\e)\to0$, since both terms are nonpositive, then they both go to $0$. This implies $y^\e\to0$ in the first term, and $y^\e\to1$ in the second one, a contradiction.

 Hence $y^\e\partial_{y}u_{0}(y^\e)-\frac{\sqrt{\mu_1}}{4}u_0(y^\e)\le-\tilde\alpha$.
Finally, for
\[
\alpha=\min\left\{-\frac{1}{2}\partial_{y}u_{0}(1),\tilde\alpha\right\},
\]
we have the claim.

Of course $y\partial_{y}u_0(y)+\e\sum_{j=1}^Nx_j\partial_{x_j}\phi(x,y)+\e y\partial_{y}\phi(x,y)\not=0$ on $\partial\Omega_\e$ implies that $\partial\Omega_\e$ is a smooth set.
\end{proof}

Next lemma tell us that the function $u_0+\e\phi$ has many critical points.
\begin{lemma}
\label{propmassimi}
The function $u_0+\e\phi$ has at least $k$ different nondegenerate local maxima in $\Omega_\e$ for $\e$ small enough.
\end{lemma}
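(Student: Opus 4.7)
The plan is to locate the critical points of $v_\e := u_0 + \e\phi$ by a two-step implicit function argument, exploiting the product structure
\[
\phi(x,y)=\sum_{j=1}^{N}\tilde\phi(x_j,y),\qquad \tilde\phi(t,0)=h(t):=\sum_{i=1}^{n}\alpha_i\cosh(\sqrt{\mu_i}t).
\]
By construction $h$ has $k$ nondegenerate maxima at points $t_1<\dots<t_k$, i.e.\ $h'(t_r)=0$ and $h''(t_r)<0$.

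The critical-point system reads
\[
\e\,\partial_t\tilde\phi(x_j,y)=0\quad (j=1,\dots,N),\qquad u_0'(y)+\e\sum_{j=1}^{N}\partial_y\tilde\phi(x_j,y)=0.
\]
Since $\partial_t\tilde\phi(t_r,0)=h'(t_r)=0$ and $\partial_t^2\tilde\phi(t_r,0)=h''(t_r)\ne0$, the implicit function theorem produces smooth functions $t=T_r(y)$, defined for $y$ in a neighborhood of $0$, with $T_r(0)=t_r$ and $\partial_t\tilde\phi(T_r(y),y)\equiv0$. Fix any $N$-tuple $\mathbf r=(r_1,\dots,r_N)\in\{1,\dots,k\}^{N}$ and substitute $x_j=T_{r_j}(y)$ into the last equation; this yields the scalar equation
\[
G_{\mathbf r}(y,\e):=u_0'(y)+\e\sum_{j=1}^{N}\partial_y\tilde\phi\bigl(T_{r_j}(y),y\bigr)=0.
\]
Since $G_{\mathbf r}(0,0)=u_0'(0)=0$ (recall $u_0$ is even and has its unique interior maximum at $0$) and $\partial_y G_{\mathbf r}(0,0)=u_0''(0)=-\lambda f(u_0(0))<0$, a second application of the implicit function theorem gives a solution $y=Y_{\mathbf r}(\e)\to0$ as $\e\to0$. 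Thus for every $\mathbf r$ we obtain a critical point
\[
P_{\mathbf r}^{\e}=\bigl(T_{r_1}(Y_{\mathbf r}(\e)),\dots,T_{r_N}(Y_{\mathbf r}(\e)),Y_{\mathbf r}(\e)\bigr)\longrightarrow (t_{r_1},\dots,t_{r_N},0),
\]
and distinct tuples $\mathbf r$ produce distinct critical points, so we already have at least $k^{N}\ge k$ candidates. By Lemma~\ref{lemmaomegae}(ii), $[t_1,t_k]^{N}\times\{0\}\subseteq\Omega_\e$, hence $P_{\mathbf r}^{\e}\in\Omega_\e$ for $\e$ small.

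It remains to check nondegeneracy and the maximum property. The Hessian of $v_\e$ at $P_{\mathbf r}^{\e}$ has the block form
\[
\hess v_\e(P_{\mathbf r}^{\e})=\begin{pmatrix} \e\,\mathrm{diag}\bigl(h''(t_{r_j})\bigr)+o(\e) & O(\e) \\ O(\e) & u_0''(0)+o(1) \end{pmatrix},
\]
because $\partial_{x_jx_l}^{2}\phi=\partial_t^{2}\tilde\phi(x_j,y)\delta_{jl}$, the cross terms $\partial_{x_jy}^{2}\phi$ are bounded, and evaluating at $P_{\mathbf r}^{\e}$ replaces $t$ by $t_{r_j}+o(1)$. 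A Schur complement (or direct perturbation) argument then shows the eigenvalues of this matrix converge, as $\e\to0$, to $\e h''(t_{r_1}),\dots,\e h''(t_{r_N})$ and $u_0''(0)$, all strictly negative. Hence each $P_{\mathbf r}^{\e}$ is a nondegenerate local maximum of $u_0+\e\phi$, giving the required $k$ maxima. The one delicate point is the Schur-complement estimate: since the $(x,x)$-block is $O(\e)$, a naive perturbation bound is not immediately strong enough, but after rescaling the $x$-coordinates by $\e^{1/2}$ the Hessian becomes uniformly negative definite in $\e$, which is the cleanest way to close the argument.
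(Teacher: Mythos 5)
Your proof is correct, but it takes a genuinely different (and more complicated) route than the paper's. The paper's argument rests on a simple observation that you did not exploit: by Lemma~\ref{lemma:costruzione:omegai}(ii), each $\omega_i$ is even in $y$, so $\omega_i'(0)=0$ and therefore $\partial_y\tilde\phi(t,0)\equiv0$ and $\partial_{ty}\tilde\phi(t,0)\equiv0$ for \emph{all} $t$. Consequently the points $(t_m,\dots,t_m,0)$ are \emph{exact} critical points of $U=u_0+\e\phi$, with no implicit function theorem needed, and the Hessian at these points is \emph{exactly} diagonal: $\partial_{x_jx_j}U=\e h''(t_m)<0$, $\partial_{x_jx_\ell}U=0$ ($\ell\neq j$), $\partial_{x_jy}U=0$, $\partial_{yy}U=u_0''(0)+O(\e)<0$. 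Negative definiteness is then immediate. Your two-step implicit function theorem argument, followed by a Schur-complement/rescaling estimate to handle the asymmetric $\e$-scaling of the blocks, is sound — note in particular that the cross terms $\partial_{x_jy}U$ at your perturbed critical point are actually $o(\e)$, not merely $O(\e)$, because $\omega_i'(Y_{\mathbf r}(\e))\to\omega_i'(0)=0$, which makes the Schur complement argument even more comfortable. Your construction also yields $k^N\geq k$ critical points (one per tuple $\mathbf r$), whereas the paper only uses the diagonal tuples $(m,\dots,m)$. Both approaches establish the lemma; the paper's is shorter because it uses the evenness of $\omega_i$ to collapse the perturbation argument into an explicit computation.
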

\begin{proof}
Set $U=u_0+\e\phi$ and let $t_1<\dots<t_k$ be local, nondegenerate maxima for $\tilde\phi(t,0)=\sum_{i=1}^n\alpha_i\cosh(\sqrt{\mu_i}t)$. Then a straightforward computation gives
\[
\nabla U(t_m,\dots,t_m,0)=0.
\]
Next observing that $\partial_{yy}u_0(0)=-\lambda f(u_0(0))<0$ we have
\begin{align}
\partial_{yy}U(t_m,\dots,t_m,0)&=\partial_{yy}u_0(0)+\e\sum_{j=1}^N\sum_{i=1}^k\alpha_i\cosh(\sqrt{\mu_i}t_m)\partial_{yy}\omega_i(0)\nonumber\\
\label{hessiana}	&<-\frac{\lambda}{2} f(u_0(0))<0,
\end{align}
for $\eps$ small enough and for all $m=1,\dots,k$. Finally in $(t_m,\dots,t_m,0)$ one has
\begin{align*}
\partial_{x_jx_j}U&=\e\sum_{i=1}^k\alpha_i{\mu_i}\cosh(\sqrt{\mu_i}t_m)<0,\\
\partial_{x_\ell x_j}U&=0,\qquad\forall\ell\not=j,\\
\partial_{x_jy}U&=\e\sum_{i=1}^k\alpha_i\sqrt{\mu_i}\sinh(\sqrt{\mu_i}t_m)\partial_y\omega_i(0)=0,
\end{align*}
which, together to~\ref{hessiana} show us that the Hessian matrix of $U$ is negative definite in $(t_m,\dots,t_m,0)$  for all $m=1,\dots,k$ and the proof is complete.
\end{proof}
Now we prove that problem~\ref{PB0} admits a stable solution in the domain $\Omega_\e$ for many $\lambda's$.
\begin{lemma}
\label{lemmalambda*}
For $\e$ small enough, it holds
\[
\lambda^*(\Omega_\e)\ge\la^*(-1,1).
\]
\end{lemma}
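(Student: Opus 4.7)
The plan is to exhibit, for each $\bar\lambda<\lambda^*(-1,1)$, a positive classical supersolution of~\ref{PB0} with parameter $\bar\lambda$ in $\Omega_\e$ for all sufficiently small $\e$. Once such a supersolution $V$ is available, the monotone iteration
\[
u^{(0)}\equiv0,\qquad -\Delta u^{(n+1)}=\bar\lambda f\bigl(u^{(n)}\bigr)\text{ in }\Omega_\e,\quad u^{(n+1)}|_{\partial\Omega_\e}=0,
\]
is increasing (since $f$ is increasing with $f(0)>0$) and stays below $V$ by the comparison principle, hence converges to a classical positive solution of~\ref{PB0} in $\Omega_\e$. Consequently $\bar\lambda\le\lambda^*(\Omega_\e)$, and since $\bar\lambda<\lambda^*(-1,1)$ is arbitrary, $\lambda^*(\Omega_\e)\ge\lambda^*(-1,1)$.

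For the supersolution I would take the $x$-independent extension of a one-dimensional stable solution on a slightly enlarged interval. The rescaling $z=y/(1+\eta)$, $w(z)=v(z(1+\eta))$ transforms $-v''=\lambda f(v)$ on $(-1-\eta,1+\eta)$ into $-w''=(1+\eta)^2\lambda f(w)$ on $(-1,1)$, yielding the identity
\[
\lambda^*(-1-\eta,1+\eta)=\frac{\lambda^*(-1,1)}{(1+\eta)^2}\longrightarrow\lambda^*(-1,1)\quad\text{as }\eta\to 0.
\]
So, given $\bar\lambda<\lambda^*(-1,1)$, I pick $\eta\in(0,\sigma)$ small enough that $\lambda^*(-1-\eta,1+\eta)>\bar\lambda$; this guarantees the existence of a stable classical solution $v=v_{\bar\lambda,\eta}$ of $-v''=\bar\lambda f(v)$ on $(-1-\eta,1+\eta)$, with $v(\pm(1+\eta))=0$ and $v>0$ inside.

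By Lemma~\ref{lemmaomegae}(i) and~(iii), for $\e$ small enough (depending on $\eta$) the closure $\overline{\Omega_\e}$ is contained in $\R^N\times[-1-\eta/2,1+\eta/2]$: indeed the lemma shows $|y^\e|\le 1+o(1)$ for any $(x^\e,y^\e)\in\partial\Omega_\e$, and $\Omega_\e$ sits below its boundary in the $y$-direction. Hence $V(x,y):=v(y)$ is smooth and strictly positive on $\overline{\Omega_\e}$, satisfies $-\Delta V=\bar\lambda f(V)$ in $\Omega_\e$, and is strictly positive on $\partial\Omega_\e$ — exactly the supersolution needed to start the iteration. The only genuinely new step is the rescaling identity for $\lambda^*$, which is a one-line chain-rule computation; the remaining work is bookkeeping of the order in which $\eta$ and $\e$ are chosen ($\eta$ first, depending on $\bar\lambda$, then $\e$ small enough depending on $\eta$).
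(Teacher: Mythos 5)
Your proposal is correct and follows essentially the same route as the paper: the rescaling identity $\lambda^*(-1-\eta,1+\eta)=\lambda^*(-1,1)/(1+\eta)^2$, the choice of $\eta$ depending on the target $\bar\lambda$, the containment of $\Omega_\e$ in the slab $\R^N\times(-1-\eta,1+\eta)$ for $\e$ small from Lemma~\ref{lemmaomegae}, and the use of the $x$-independent one-dimensional stable solution as a supersolution. The only cosmetic difference is that the paper invokes~\cite[Theorem 4.7]{Bandle(book)} as a black box for the sub/super-solution step (and uses the extremal solution $u_\eta^*$ at parameter $\lambda_\eta^*$ as the barrier), whereas you spell out the monotone iteration explicitly and use the stable solution at the sub-extremal parameter $\bar\lambda$; both are standard and equivalent.
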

\begin{proof}
Let us write $\lambda^*=\lambda^*(-1,1)$ for simplicity. For $\eta>0$ small enough we have
\[
\lambda_\eta^*=\lambda^*(-1-\eta,1+\eta)=\frac{\lambda^*}{(1+\eta)^2}>\lambda,
\]
and by $u_\eta^*$ the solution of
\[
\begin{cases}
-u''=\lambda_\eta^*f(u)&\hbox{in }(-1-\eta,1+\eta)\\
u>0&\hbox{in }(-1-\eta,1+\eta)\\
u(\pm(1+\eta))=0.
\end{cases}
\]
Now, let $\e$ so small that $\Omega_\e\subseteq\R^N\times(-1-\eta,1-\eta)$, then $u_\eta^*$ is a \emph{supersolution} of problem
\[
\begin{cases}
-u''=\lambda_\eta^{*}f(u)&\hbox{in }\Omega_\e\\
u>0&\hbox{in }\Omega_\e\\
u=0&\hbox{on }\partial\Omega_\e
\end{cases}
\]
 that is $-\Delta u_\eta^*\ge\lambda_\eta^*f(u_\eta^*)$ in $\Omega_\e$ and $u_\eta^*\ge0$ on $\partial\Omega_\e$ (here we follows the notations in~\cite{Bandle(book)}). Then~\cite[Theorem 4.7]{Bandle(book)} ensures that $\lambda^*(\Omega_\e)\ge\lambda_\eta^*>\lambda$.
\end{proof}
Finally, for $\e>0$, we define  
\begin{equation}
\label{ue}
\boxed{u_\e\hbox{ as a stable solution of problem~\ref{PB0} in } \Omega_\e.}
\end{equation}
\subsection{Properties of the function $u_\e$}
Before to state the main properties of the solution $u_\e$ we compute the eigenvalues of a related operator. The proof uses the classical separation of variables.
\begin{lemma}
\label{lemma1}
Denote by $\mu_{1,\si}(R)$ the first eigenvalue of the operator $-\Delta-\lambda f'(u_\si(y))$ in the rectangle
\[
R=\prod_{j}^N(a_j,b_j)\times(-1-\sigma ,1+\sigma ),
\]
with $u_{|\partial R}=0$, where $a_j<b_j$ for all $j=1,\dots,N$. Then
\[
\mu_{1,\si}(R)=\mu_\si+\sum_{j=1}^N\left(\frac\pi{b_j-a_j}\right)^2>\mu_\si.
\]
\end{lemma}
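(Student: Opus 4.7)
The plan is to exploit the product structure of the rectangle $R$ and the fact that the operator $-\Delta-\lambda f'(u_\si(y))$ splits into the pure Laplacian in the $x$-variables plus the one-dimensional operator $L_y:=-\frac{\mathrm{d}^2}{\mathrm{d}y^2}-\lambda f'(u_\si(y))$. By construction of $\mu_\si$ (the eigenvalue introduced right after~\ref{eq:eigenv}), $\mu_\si$ is the first Dirichlet eigenvalue of $L_y$ on $(-1-\sigma,1+\sigma)$, with a strictly positive eigenfunction $\omega^{*}$. In parallel, for each $j=1,\dots,N$ the first Dirichlet eigenvalue of $-\mathrm{d}^2/\mathrm{d}x_j^2$ on $(a_j,b_j)$ is $\left(\tfrac{\pi}{b_j-a_j}\right)^{2}$, with eigenfunction $\sin\!\left(\tfrac{\pi(x_j-a_j)}{b_j-a_j}\right)>0$.

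Next, I would consider the function
\[
\Phi(x_1,\dots,x_N,y)=\omega^{*}(y)\prod_{j=1}^{N}\sin\!\left(\frac{\pi(x_j-a_j)}{b_j-a_j}\right),
\]
which belongs to $H^1_0(R)$ and is strictly positive in $R$. A direct computation, using the eigenvalue equations in each variable separately, gives
\[
-\Delta\Phi-\lambda f'(u_\si(y))\Phi=\left(\mu_\si+\sum_{j=1}^{N}\left(\frac{\pi}{b_j-a_j}\right)^{2}\right)\Phi\qquad\text{in }R.
\]
Hence $\mu^{*}:=\mu_\si+\sum_{j=1}^{N}(\pi/(b_j-a_j))^{2}$ is an eigenvalue of the problem, with a positive eigenfunction.

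Finally, to conclude that $\mu^{*}$ is indeed the \emph{first} eigenvalue, I would invoke the Krein--Rutman / Perron--Frobenius type principle: among the eigenvalues of a self-adjoint elliptic operator with Dirichlet conditions on a bounded domain, only the first one admits a positive eigenfunction. Since $\Phi>0$ in $R$, it must be the first Dirichlet eigenfunction and $\mu^{*}=\mu_{1,\si}(R)$. Equivalently, one can argue by completeness: the tensor products of the 1D Dirichlet eigenbases in $x_j$ (eigenvalues $(k_j\pi/(b_j-a_j))^2$, $k_j\in\N$) and the 1D eigenbasis of $L_y$ (eigenvalues $\mu_\si=\mu_{1,\si}<\mu_{2,\si}<\dots$) form an orthonormal basis of $L^2(R)$, so every eigenvalue of the full operator is of the form $\mu_{\ell,\si}+\sum_{j}(k_j\pi/(b_j-a_j))^2$, and the smallest one is obtained for $\ell=k_1=\dots=k_N=1$. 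The strict inequality $\mu_{1,\si}(R)>\mu_\si$ is then immediate since $\sum_j(\pi/(b_j-a_j))^2>0$. There is no serious obstacle in this proof; the only point requiring a bit of care is the uniqueness-of-positive-eigenfunction step, but this is standard elliptic regularity plus the strong maximum principle.
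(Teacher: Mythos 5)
Your proof is correct and follows essentially the same route as the paper: both construct the separated eigenfunction $\omega^*(y)\prod_j\sin(\pi(x_j-a_j)/(b_j-a_j))$, identify the corresponding eigenvalue as $\mu_\si+\sum_j(\pi/(b_j-a_j))^2$, and conclude from the strict positivity of this eigenfunction that it must be the principal one. The only stylistic difference is that the paper writes the separation of variables with unspecified constants $c_j$ which are then solved for, whereas you quote the one-dimensional first eigenvalues directly; the substance is identical.
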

\begin{proof}
Fix $\mu\in\R$ and let $A_j$ and $B$ be positive solutions of
\begin{equation}
\label{pippo}
\begin{cases}
A_j''(t)=c_j A_j(t)\quad&\text{in }(a_j,b_j)\\
A_j(a_j)=A_j(b_j)=0
\end{cases}
\end{equation}
and
\begin{equation}
\label{pluto}
\begin{cases}
-B''(y)-\left(\lambda f'\left(u_\si(y)\right)+\mu\right)B(y)=\sum_{j=1}^Nc_j B(y)\quad\text{in }(-1-\sigma ,1+\sigma )\\
B(\pm(1+\sigma))=0
\end{cases}
\end{equation}
for some $c_j\in\R$. We have that the solution of~\ref{pippo} is given by
\[
A_j(t)=\alpha\sin\left(\sqrt{-c_j}(t-a_j)\right)
\]
with $\alpha\in\R$ and
\[
c_j=-\left(\frac\pi{b_j-a_j}\right)^2<0
\]
and from~\ref{pluto} it follows
\[
\sum_{j=1}^Nc_j+\mu=\mu_\si.
\]
Finally, since
\[
v(x,y)=B(y)\prod_{j}^NA_j(x_j),
\]
solves
\[
\begin{cases}
-\Delta v-\lambda f'(u_\si(y))v=\mu v&\hbox{in }R\\
v=0&\hbox{on }\partial R
\end{cases}
\]
and $v>0$ we conclude that
\[
\mu_{1,\si}(R)=\mu=\mu_\si-\sum_{j=1}^Nc_j=\mu_\si+\sum_{j=1}^N\left(\frac\pi{b_j-a_j}\right)^2>\mu_\si.\qedhere
\]
\end{proof}
\begin{remark}
\label{sez2:rmk1}
From $(i)$ of Lemma~\ref{lemmaomegae} and the previous lemma, one has that the first eigenvalue of the operator $-\Delta-\lambda f'(u_\si(y))$ with Dirichlet boundary conditions in $\Omega_\e$ is strictly positive.
\end{remark}
The rest of the section is devoted to show that the solution $u_\e$ defined in~\ref{ue} is close to $u_0+\e\phi$ as $\e\to0$. By Lemma \ref{propmassimi} then $(iv)$ of Theorem~\ref{THM1} follows.

Let us start with the following bound for $u_\e$.

\begin{lemma}
\label{lemmaconvergenza1bis}
There exists a function $h:(0,+\infty)\to(0,+\infty)$ such that $h(\e)\to0$ for $\e\to0$ and $u_\e-u_0\le h(\e)$ in $\Omega_\e$ uniformly with respect to $(x,y)\in\Omega_\e$.
\end{lemma}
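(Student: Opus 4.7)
The plan is to dominate $u_\e$ from above by a one-parameter family of explicit supersolutions on $\Omega_\e$, which in turn converge uniformly to $u_0$ as the parameter shrinks. For $\eta \in (0,\sigma)$ small enough that $\lambda^*(-1-\eta,1+\eta) = \lambda^*(-1,1)/(1+\eta)^2 > \lambda$ (as in the proof of Lemma~\ref{lemmalambda*}), let $u^\eta$ be the stable solution of $-u'' = \lambda f(u)$ on $(-1-\eta,1+\eta)$ with zero Dirichlet data. Regarded as a function of $(x,y)$ depending only on $y$, it satisfies $-\Delta u^\eta = \lambda f(u^\eta)$ on $\R^N \times (-1-\eta,1+\eta)$. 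By Lemma~\ref{lemmaomegae}(i), for $\e$ small enough $\Omega_\e \subseteq \R^N \times (-1-\eta,1+\eta)$, and by Lemma~\ref{lemmaomegae}(iii) the strict inequality $|y| < 1+\eta$ holds on $\partial\Omega_\e$. Consequently $u^\eta > 0$ on $\partial\Omega_\e$, so $u^\eta$ is a positive classical supersolution of~\ref{PB0} in $\Omega_\e$.

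The next step is the comparison $u_\e \le u^\eta$ in $\Omega_\e$. Since $u_\e$ is the minimal stable solution of~\ref{PB0} in $\Omega_\e$ (as in~\cite{Bandle(book)},~\cite{cr75},~\cite{mp}), it is the monotone limit of the Picard iteration $v_0 \equiv 0$, $-\Delta v_{k+1} = \lambda f(v_k)$ in $\Omega_\e$, $v_{k+1}|_{\partial\Omega_\e} = 0$. A routine induction based on the monotonicity of $f$ and the weak maximum principle yields $v_k \le u^\eta$ for every $k$: if $v_k \le u^\eta$, then $-\Delta(u^\eta - v_{k+1}) = \lambda(f(u^\eta) - f(v_k)) \ge 0$ in $\Omega_\e$ with $u^\eta - v_{k+1} \ge 0$ on $\partial\Omega_\e$. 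Passing to the limit gives $u_\e \le u^\eta$ pointwise on $\Omega_\e$.

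The third step is to show $u^\eta \to u_0$ uniformly on $[-1-\eta,1+\eta]$ as $\eta \to 0$. The rescaling $z = y/(1+\eta)$ turns $u^\eta$ into the minimal stable solution $\tilde u^\eta$ of $-\tilde u'' = \lambda(1+\eta)^2 f(\tilde u)$ on $(-1,1)$ with zero Dirichlet data, and continuous dependence of the minimal stable branch on the parameter gives $\tilde u^\eta \to u_0$ in $C^0([-1,1])$; using that $u^\eta$ is concave on $[-1-\eta,1+\eta]$ with zero boundary values, this upgrades to $u^\eta \to u_0$ uniformly on $[-1-\eta,1+\eta]$, where $u_0$ still denotes the negative extension introduced at the beginning of the section. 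Given $\delta > 0$, we first choose $\eta(\delta) > 0$ with $\sup_{[-1-\eta,1+\eta]}(u^\eta - u_0) \le \delta$, and then $\e$ so small that $\Omega_\e \subseteq \R^N \times (-1-\eta,1+\eta)$, obtaining
\[
u_\e(x,y) - u_0(y) \le u^\eta(y) - u_0(y) \le \delta \quad\text{for all } (x,y) \in \Omega_\e.
\]
Setting $h(\e) := \inf\{\delta > 0 : \text{the above holds}\}$ produces the required function with $h(\e) \to 0$.

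The main obstacle I expect is the comparison $u_\e \le u^\eta$: it requires the identification of $u_\e$ with the minimal stable solution, so that the monotone iteration from zero actually reaches it, and it uses that $u^\eta$ is a strictly positive supersolution of~\ref{PB0} on $\Omega_\e$. Once this identification is in place, the comparison and the passage to the limit in $\eta$ reduce to standard one-dimensional continuous dependence arguments.
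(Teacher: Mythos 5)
Your proof is correct, but it takes a genuinely different route from the paper's in both of its key steps. For the comparison $u_\e\le u^\eta$, you identify $u_\e$ with the minimal stable solution and run the monotone Picard iteration from $v_0\equiv0$ with $u^\eta$ as a barrier; the paper instead works directly with the stable solution, applies the convexity inequality $f(u_\e)-f(u_\eta)\le f'(u_\e)(u_\e-u_\eta)$, and invokes the maximum principle for $-\Delta-\lambda f'(u_\e)$, which is licensed by the \emph{stability} of $u_\e$. Your route requires the (standard, but not spelled out here) identification of the stable solution with the minimal one; the paper's version needs no such identification and is correspondingly shorter, since stability alone suffices. For the passage $\eta\to0$, you use continuous dependence of the minimal stable branch after the rescaling $z=y/(1+\eta)$, and then transfer the estimate from $[-1,1]$ to $[-1-\eta,1+\eta]$ (Lipschitz continuity of $u_0$ is enough for that transfer; the concavity you invoke is not needed). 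The paper instead claims the explicit bound $u_\eta-u_0\le\max(u_\eta-u_0)\big|_{y=\pm(1+\eta)}=-u_0(1+\eta)$ ``by the maximum principle''; that step appears to contain a slip, since $u_\eta\ge u_0$ on $[-1-\eta,1+\eta]$ and $f$ is increasing, so $u_\eta-u_0$ is concave there and its maximum sits in the interior rather than at the endpoints (for $f(u)=u+1$ one checks directly that $(u_\eta-u_0)(0)>(u_\eta-u_0)(1+\eta)$). Your continuous-dependence argument sidesteps this issue and still yields $\sup_{[-1-\eta,1+\eta]}(u^\eta-u_0)\to0$, which is all the lemma requires, so on this point your route is the more robust one.
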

\begin{proof}
For $\eta>0$, let $u_	\eta$ be the  stable solution of
\[
\begin{cases}
-u''=\lambda f(u)&\hbox{in }(-1-\eta,1+\eta)\\
u>0&\hbox{in }(-1-\eta,1+\eta)\\
u(\pm(1+\eta))=0.
\end{cases}
\]
For $\e$ small enough such that $\Omega_\e\subseteq\R^N\times(-1-\eta,1+\eta)$, from the convexity of $f$ we have
\[
\begin{cases}
-\Delta(u_\e-u_\eta)=\lambda \left(f(u_\e)-f(u_\eta)\right)\le \lambda f'(u_\e)(u_\e-u_\eta)& \hbox{in}\ \Omega_\eps\\
u_\e-u_\eta<0& \hbox{on}\ \partial\Omega_\e
\end{cases}
\]
and then from the stability of $u_\e$ we can apply the maximum principle to deduce $u_\e\le u_\eta$ in $\Omega_\e$. For $(x,y)\in\Omega_\e$, by the maximum principle applied to $u_\eta-u_0$ we get
\[
u_\e(x,y)-u_0(y)\le u_\eta(y)-u_0(y)\le\max(u_\eta-u_0)_{|y=\pm(1+\eta)}=-u_0(1+\eta).
\]
Next let us define the function $h(\e)$ as follows: for any $\e>0$ let $\eta(\e)$ be the smallest positive number such that $\Omega_\e\subseteq\R^N\times(-1-\eta(\e),1+\eta(\e))$. By the properties of $\Omega_\e$ we have that $\eta(\e)\to0$ as $\e\to0$. Finally, as $\e\to0$
\[
h(\e)=-u_0\big(1+\eta(\e)\big)\to0,
\]
which gives the claim.
\end{proof}
Next Lemma gives a first approximation of the closeness of $u_\e$ to $u_0+\e\phi$. It will be improved later.
\begin{lemma}
\label{lemmaconvergenza2bis}
Given $\psi_\e=\frac{u_\eps-u_\si-\eps\phi}{\e}$ one has $0\le\psi_\e<\bar\psi$ in $\Omega_\e$ for $\e$ small enough, where
\[
\bar\psi(x,y)=\sum_{j=1}^N\sum_{i=1}^n\abs{\alpha_i}\left(\omega_i(y)-C_{i}\right)\cosh(\sqrt{\mu_i}x_j),
\]
with $0<C_i<\inf\limits_{(-1-\eta,1+\eta)}\omega_i$ for all $i=1,\dots,k$ and $0<\eta<\sigma$ small, fixed.
\end{lemma}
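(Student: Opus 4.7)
The plan is to apply a maximum principle argument for the linearized operator $L := -\Delta - \lambda f'(u_\si)$, which by Remark~\ref{sez2:rmk1} has a strictly positive first Dirichlet eigenvalue in $\Omega_\e$. The two inequalities $\psi_\e\ge 0$ and $\psi_\e<\bar\psi$ are handled separately, comparing $\psi_\e$ with the natural barriers $0$ and $\bar\psi$.

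For the lower bound, I would set $w_\e := \e\psi_\e = u_\e - u_\si - \e\phi$ and compute, using the equations satisfied by $u_\e$, $u_\si$ and $\phi$,
\[
L w_\e \;=\; \lambda\bigl[f(u_\e)-f(u_\si)-f'(u_\si)(u_\e-u_\si)\bigr]\;\ge\;0,
\]
the sign being a consequence of the convexity of $f$ (the tangent lies below the graph). Since $w_\e = 0$ on $\partial\Omega_\e$, because both $u_\e$ and $u_\si+\e\phi$ vanish there by the very definition of $\Omega_\e$, the maximum principle applied to $L$ yields $w_\e\ge 0$ and hence $\psi_\e\ge 0$.

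For the upper bound, I would set $v_\e := \bar\psi - \psi_\e$. On $\partial\Omega_\e$ one has $\psi_\e=0$ and $\bar\psi>0$ (since $\omega_i>C_i$ on $[-1-\eta,1+\eta]$), so $v_\e>0$ there. Using the ODE $-\omega_i'' - \lambda f'(u_\si)\omega_i = \mu_i\omega_i$ satisfied by each $\omega_i$, a direct calculation gives
\[
L\bar\psi \;=\; \sum_{j=1}^N\sum_{i=1}^n |\alpha_i|\,C_i\bigl(\mu_i+\lambda f'(u_\si)\bigr)\cosh(\sqrt{\mu_i}\,x_j) \;=:\; P(x,y)>0.
\]
Dividing the identity for $Lw_\e$ above by $\e$ and applying Taylor's theorem, one obtains $L\psi_\e = E_\e$ with
\[
E_\e \;:=\; \frac{\lambda}{2\e}\,f''(\xi)(u_\e-u_\si)^2 \;\ge\;0
\]
for some $\xi$ between $u_\e$ and $u_\si$. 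Hence $L v_\e = P - E_\e$, and once the pointwise estimate $E_\e\le P$ is established in $\Omega_\e$, the positivity of the first eigenvalue of $L$ combined with $v_\e>0$ on $\partial\Omega_\e$ yields $v_\e>0$ in $\Omega_\e$ by the maximum principle.

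The main obstacle is precisely the pointwise comparison $E_\e\le P$ in $\Omega_\e$. Using $u_\e-u_\si = \e(\phi+\psi_\e)$, near the centre of $\Omega_\e$, where $\phi$ and $\psi_\e$ are bounded, $E_\e$ is of order $\e$ while $P$ is bounded below by a positive constant, so the estimate is trivial. The delicate region is near the ``tips'' of $\Omega_\e$, where Lemma~\ref{lemmaomegae}(iii) shows $\cosh(\sqrt{\mu_1}x_j)$ is of order $1/\e$ and both $P$ and $E_\e$ are of order $1/\e$. To control this regime I would exploit the boundary relation $u_\si(y^\e)\simeq\e\,\omega_1(y^\e)\cosh(\sqrt{\mu_1}x_j^\e)$ from Lemma~\ref{lemmaomegae}(iii), the one-sided bound $u_\e\le u_\si+h(\e)$ coming from Lemma~\ref{lemmaconvergenza1bis}, and the uniform boundedness of $|u_\e-u_\si|$ (which follows from $0\le u_\e\le u_\eta$ with $u_\eta$ a one-dimensional solution on a slightly larger interval) to estimate $(u_\e-u_\si)^2$ by a small multiple of $\e\cosh(\sqrt{\mu_1}x_j)$, so that $E_\e$ is dominated by $P$ for $\e$ small enough.
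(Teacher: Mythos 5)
Your lower-bound argument (linearizing at $u_\si$ and invoking the strict positivity of the first eigenvalue of $-\Delta-\lambda f'(u_\si)$ in $\Omega_\e$ from Remark~\ref{sez2:rmk1}) is exactly the paper's. The upper bound, however, diverges in a way that matters: the paper linearizes at $u_\e$, not at $u_\si$. Writing $L_\e=-\Delta-\lambda f'(u_\e)$, which is admissible for maximum-principle arguments precisely because $u_\e$ is a \emph{stable} solution, convexity of $f$ gives
\[
L_\e\psi_\e\le\lambda\bigl(f'(u_\e)-f'(u_\si)\bigr)\phi,
\]
a remainder that is \emph{first order} in $u_\e-u_\si$. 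After subtracting the analogous identity for $L_\e\bar\psi$, the right-hand side takes the form
\[
\sum_{j,i}\Bigl[(\alpha_i+\abs{\alpha_i})\,\lambda\bigl(f'(u_\e)-f'(u_\si)\bigr)\omega_i(y)-\abs{\alpha_i}\bigl(\lambda f'(u_\e)+\mu_i\bigr)C_i\Bigr]\cosh(\sqrt{\mu_i}x_j),
\]
and since Lemma~\ref{lemmaconvergenza1bis} gives $f'(u_\e)-f'(u_\si)\le Ch(\e)$ uniformly with $h(\e)\to0$, the strictly negative constant $-\abs{\alpha_i}\mu_iC_i$ dominates for small $\e$. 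No rate for $h(\e)$ and no pointwise tracking of $\cosh$ factors is needed.

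Your choice $L=-\Delta-\lambda f'(u_\si)$ produces instead the \emph{quadratic} remainder $E_\e=\frac{\lambda}{2\e}f''(\xi)(u_\e-u_\si)^2$, and the resulting pointwise comparison $E_\e\le P$ is where the gap lies. Near the centre you argue that ``$\phi$ and $\psi_\e$ are bounded''; but boundedness of $\psi_\e$ is essentially the content of the lemma you are proving, so this is circular. The non-circular alternative, using only $u_\e-u_\si\le h(\e)$ together with $u_\e-u_\si\ge\e\phi$, gives $E_\e=O(h(\e)^2/\e)$ there, which requires the rate $h(\e)=o(\sqrt\e)$ that Lemma~\ref{lemmaconvergenza1bis} does not supply. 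Near the tips the situation is worse: using the lower bound $u_\e-u_\si\ge\e\phi$ and the boundary relation, one only gets $(u_\e-u_\si)^2\lesssim\bigl(\e\cosh(\sqrt{\mu_1}x_j)\bigr)^2$, hence $E_\e/P\lesssim\e\cosh(\sqrt{\mu_1}x_j)$, a quantity that is merely $O(1)$ on $\Omega_\e$, not $o(1)$; whether it falls below $1$ depends on the constants $\lambda$, $\norma{f''}_\infty$, $\norma{u_\si}_\infty$, the $C_i$ and $\mu_i$, none of which your sketch controls. In short, the comparison you flag as the ``main obstacle'' is not resolved by the outline given. The fix is the paper's one-line change: compare with respect to $-\Delta-\lambda f'(u_\e)$ rather than $-\Delta-\lambda f'(u_\si)$, so the error is linear in $u_\e-u_\si$ and the uniform smallness from Lemma~\ref{lemmaconvergenza1bis} closes the argument with no further estimates.
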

\begin{proof}
Using the convexity of $f$ we have
\[
-\Delta\psi_\e-\lambda f'(u_\si)\psi_\e\ge0.
\]
Moreover, $\psi_\e=0$ on $\partial\Omega_\e$ and taking into account Remark~\ref{sez2:rmk1} we can apply the maximum principle to get $\psi_\e>0$ in $\Omega_\e$.

Again from the convexity of $f$ we have
\begin{align}
-\Delta\psi_\e-\lambda f'(u_\e)\psi_\e&\le\lambda \left(f'(u_\e)-f'(u_\si)\right)\phi\nonumber\\
\label{lemmaconvergenza2bis:eq1}
&=\lambda \sum_{j=1}^N\sum_{i=1}^n\alpha_i\left(f'(u_\e)-f'(u_\si)\right)\cosh(\sqrt{\mu_i}x_j)\omega_i(y).
\end{align}
From the  definition of $C_i$ it holds $\bar\psi>0$ on $\overline\Omega_\e$.
Furthermore, in $\Omega_\eps$ we have that $\bar\psi$ verifies
\[
-\Delta \bar\psi=\sum_{j=1}^N\sum_{i=1}^n\abs{\alpha_i}\left(\lambda f'(u_\si)\omega_i(y)+\mu_iC_i\right)\cosh(\sqrt{\mu_i}x_j),
\]
and then
\begin{align}
-\Delta &\bar\psi-\lambda f'(u_\e)\bar\psi\nonumber\\
\label{lemmaconvergenza2bis:eq2}
&=\sum_{j=1}^N\sum_{i=1}^n\abs{\alpha_i}\left[\lambda \left(f'(u_\si)-f'(u_\e)\right)\omega_i(y)+(\lambda f'(u_\e)+\mu_i)C_i\right]\cosh(\sqrt{\mu_i}x_j).
\end{align}
Moreover
\[
f'(u_\e)-f'(u_\si)=f''\left(t_\e u_\e+(1-t_\e)u_\si\right)(u_\e-u_\si),
\]
with $t_\e=t_\e(x,y)\in(0,1)$ .\\
From Lemma~\ref{lemmaconvergenza1bis} we have $u_\e-u_0\le h(\e)$ with $h>0$ and $h\to0$ as $\e\to0$. Since $f''$ is positive and $t_\e u_\e+(1-t_\e)u_\si$ is bounded uniformly with respect to $\e$ we get
\[
\lambda\left(f'(u_\e)-f'(u_0)\right)\le Ch(\e),
\]
for some $C>0$.
Finally from~\ref{lemmaconvergenza2bis:eq1} and~\ref{lemmaconvergenza2bis:eq2} we deduce that
\begin{align*}
-\Delta&(\psi_\e-\bar\psi)-\lambda f'(u_\e)(\psi_\e-\bar\psi)\\
&\le\sum_{j=1}^N\sum_{i=1}^n\left[(\abs{\alpha_i}+\alpha_i)\lambda (f'(u_\e)-f'(u_0))\omega_i(y)-\abs{\alpha_i}(\lambda f'(u_\e)+\mu_i)C_i\right]\cosh(\sqrt{\mu_i}x_j)\\
&\le\sum_{j=1}^N\sum_{i=1}^n\big[(\abs{\alpha_i}+\alpha_i)Ch(\e)-\underbrace{\abs{\alpha_i}(\lambda f'(u_\e)+\mu_i)C_i}_{\le-\abs{\alpha_i}\mu_iC_i}\big]\cosh(\sqrt{\mu_i}x_j)\le0,
\end{align*}
for $\e$ small enough, which gives
\[
\begin{cases}
-\Delta(\psi_\e-\bar\psi)-\lambda f'(u_\e)(\psi_\e-\bar\psi)\le0 &\text{in}\ \Omega_\eps\\
\psi_\e-\bar\psi<0&\text{on}\ \partial\Omega_\e
\end{cases}
\]
and the maximum principle provides $\psi_\e-\bar\psi<0$ in $\Omega_\e$.
\end{proof}
Next lemma gives us the final estimate. Here it will be crucial to choose the coefficients $\mu_i$ as in~\ref{scelta:mu}.
\begin{lemma}
\label{lemmaconvo(e)}
Let
\[
\Psi_\e=\frac{u_\eps-u_0-\eps\phi}{\e^2}.
\]
Then in every $K\subset\!\subset\Omega_\e$ one has $\abs{\Psi_\e}\le C$, for some $C=C(K)>0$ and $\e$ small enough.
\end{lemma}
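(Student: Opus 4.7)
The plan is to derive a linear elliptic PDE for $\Psi_\e$ by Taylor expansion and dominate it by an explicit super-solution built from products of $\cosh$'s; the condition $\mu_i<\mu_0/4$ from~\ref{scelta:mu} will enter precisely because squaring the hyperbolic cosines appearing in $\phi+\psi_\e$ produces exponents up to $(\sqrt{\mu_i}+\sqrt{\mu_{i'}})^2\le 4\mu_1$, which must still lie below the spectral threshold $\mu_0$.

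First, from $-\Delta u_\e=\lambda f(u_\e)$, $-\Delta u_0=\lambda f(u_0)$, $-\Delta\phi=\lambda f'(u_0)\phi$ and the Taylor expansion $f(u_\e)=f(u_0)+f'(u_0)(u_\e-u_0)+\tfrac{1}{2}f''(\zeta_\e)(u_\e-u_0)^2$, one obtains
\[
-\Delta\Psi_\e-\lambda f'(u_0)\Psi_\e=\tfrac{\lambda}{2}f''(\zeta_\e)(\phi+\psi_\e)^2=:R_\e\quad\text{in }\Omega_\e,
\]
with $\Psi_\e=0$ on $\partial\Omega_\e$. By Lemma~\ref{lemmaconvergenza2bis} one has $\Psi_\e\ge 0$, so it suffices to prove a uniform upper bound. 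Since $0\le\psi_\e\le\bar\psi$, $f''(\zeta_\e)$ is bounded (as $u_\e$ and $u_0$ are), and the $y$-factors $\omega_i$ and $\omega_i-C_i$ are bounded on $[-1-\eta,1+\eta]$, the right-hand side satisfies
\[
R_\e(x,y)\le C\sum_{i,i'=1}^{n}\sum_{j,j'=1}^{N}\cosh(\sqrt{\mu_i}x_j)\cosh(\sqrt{\mu_{i'}}x_{j'})
\]
for some $C$ independent of $\e$.

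The super-solution is chosen as
\[
\bar\Psi(x,y)=A\sum_{i,i'=1}^{n}\sum_{j,j'=1}^{N}\cosh(\sqrt{\mu_i}x_j)\cosh(\sqrt{\mu_{i'}}x_{j'})\,w(y),
\]
where $w$ is the unique solution of $-w''-(\lambda f'(u_0)+4\mu_1)w=1$ in $(-1-\sigma,1+\sigma)$ with $w(\pm(1+\sigma))=0$. Since $4\mu_1<\mu_0$ by~\ref{scelta:mu}, the first Dirichlet eigenvalue $\mu_0-4\mu_1$ of the associated operator is strictly positive, so $w$ exists, is strictly positive in $(-1-\sigma,1+\sigma)$, and $w\ge c_0>0$ on $[-1-\eta,1+\eta]$. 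For each summand the verification splits in two cases. If $j\ne j'$ the two $\cosh$'s involve distinct variables and $(-\Delta-\lambda f'(u_0))$ applied to $\cosh(\sqrt{\mu_i}x_j)\cosh(\sqrt{\mu_{i'}}x_{j'})w(y)$ equals $[-w''-\lambda f'(u_0)w-(\mu_i+\mu_{i'})w]\cosh(\sqrt{\mu_i}x_j)\cosh(\sqrt{\mu_{i'}}x_{j'})$, which is $\ge\cosh(\sqrt{\mu_i}x_j)\cosh(\sqrt{\mu_{i'}}x_{j'})$ because $\mu_i+\mu_{i'}\le 2\mu_1\le 4\mu_1$. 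If $j=j'$ the two $\cosh$'s lie in the same variable and $-\partial_{x_jx_j}(\cosh\cdot\cosh)$ produces an unwanted term $-2\sqrt{\mu_i\mu_{i'}}\sinh(\sqrt{\mu_i}x_j)\sinh(\sqrt{\mu_{i'}}x_j)\le 0$; bounding it below by $-2\sqrt{\mu_i\mu_{i'}}\cosh(\sqrt{\mu_i}x_j)\cosh(\sqrt{\mu_{i'}}x_j)$ turns the effective exponent $\mu_i+\mu_{i'}$ into $(\sqrt{\mu_i}+\sqrt{\mu_{i'}})^2\le 4\mu_1<\mu_0$ and the same inequality $\ge\cosh(\sqrt{\mu_i}x_j)\cosh(\sqrt{\mu_{i'}}x_j)$ follows. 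Choosing $A$ large one obtains $(-\Delta-\lambda f'(u_0))\bar\Psi\ge R_\e$ in $\Omega_\e$.

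Finally, $\bar\Psi>0$ on $\overline{\Omega_\e}$, so $\bar\Psi\ge\Psi_\e=0$ on $\partial\Omega_\e$; the maximum principle, applicable thanks to Remark~\ref{sez2:rmk1}, yields $0\le\Psi_\e\le\bar\Psi$ in $\Omega_\e$. Since $\bar\Psi$ is an $\e$-independent function bounded on any compact $K\subset\!\subset\mathcal S$, the claim follows for $\e$ small enough (so that $K\subset\Omega_\e$). The delicate step is the diagonal case $j=j'$: it is precisely there that the relevant exponent is forced to jump from $\mu_i+\mu_{i'}$ up to $(\sqrt{\mu_i}+\sqrt{\mu_{i'}})^2$, which is exactly why~\ref{scelta:mu} demands the stronger bound $\mu_i<\mu_0/4$ rather than the weaker $\mu_i<\mu_0/2$.
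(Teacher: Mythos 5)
Your proof is correct and follows essentially the same strategy as the paper: derive the linear elliptic inequality $-\Delta\Psi_\e-\lambda f'(u_0)\Psi_\e\le R_\e$ with $R_\e$ controlled by products of hyperbolic cosines, build an explicit supersolution whose existence hinges on $4\mu_1<\mu_0$ from~\ref{scelta:mu}, and conclude by the maximum principle via Remark~\ref{sez2:rmk1}. The differences are technical rather than structural. Your derivation of the equation for $\Psi_\e$ via a second-order Taylor expansion of $f$ about $u_0$ is actually a bit cleaner than the paper's third-order expansion of $F(\e)=f(u_0+\e\phi+\e^2\Psi_\e)$; the one point it requires is that $f''(\zeta_\e)$ be uniformly bounded, which you correctly justify from the uniform $L^\infty$ bound on $u_\e$ (Lemma~\ref{lemmaconvergenza1bis}). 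For the supersolution, the paper first collapses the quadratic right-hand side to a single sum $C_\infty\sum_j\cosh(2\sqrt{\mu_1}x_j)$ (using $(\sum_j c_j)^2\le N\sum_j c_j^2$ and $\cosh^2 a\le\cosh 2a$) and then takes $\psi_\infty$ built from $\omega_\infty=\omega_{4\mu_1}$ of Lemma~\ref{lemma:costruzione:omegai}; you instead keep the full double sum of products and build $\bar\Psi$ from the solution $w$ of the inhomogeneous ODE $-w''-(\lambda f'(u_0)+4\mu_1)w=1$, splitting the verification into diagonal and off-diagonal cases. Your diagonal computation, where $-\partial_{x_jx_j}$ turns the effective exponent $\mu_i+\mu_{i'}$ into $(\sqrt{\mu_i}+\sqrt{\mu_{i'}})^2\le4\mu_1$, is exactly the same phenomenon that in the paper appears as the doubling $\sqrt{\mu_1}\mapsto2\sqrt{\mu_1}$ when squaring; both recognize why~\ref{scelta:mu} must impose $\mu_1<\mu_0/4$ rather than $\mu_0/2$. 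Either route is valid and the constants involved are $\e$-independent, so the conclusion on compact sets follows identically.
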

\begin{proof}
Let us denote by $C$ any positive constant which does not depend on $\e$.
Consider the function $F(\e)=f(u_0+\e\phi+\e^2\Psi_\e)$. Then for $\e$ small there exists $t_\e=t_\e(x,y)\in(0,1)$ such that
\begin{align}\label{lemma:conv:2:eq:1}	
f(u_\e)=F(\e)=f(u_0)&+\e f'(u_0)\phi+\frac{\e^2}{2}f''(u_0)\phi^2+\e^2f'(u_0)\Psi_\e+\nonumber\\
	&+\frac{\e^3}{6}f'''(u_0+t_\e\e\phi+t_\e^2\e^2\Psi_\e)(\phi+2t_\e\e\Psi_\e)^2+\nonumber\\
	&+\e^3f''(u_0+t_\e\e\phi+t_\e^2\e^2\Psi_\e)(\phi+2t_\e\e\Psi_\e)\Psi_\e.
\end{align}
From the previous lemma we have that $0\le\e\Psi_\e\le\bar\psi\le C\sum_{j=1}^N\cosh(\sqrt{\mu_1}x_j)$. From Lemma~\ref{lemmaomegae}, $\abs{x_j}\le C\log(1/\e)$ for all $j=1,\dots,N$ and then
\[
\left|u_0+t_\e\e\phi+t_\e^2\e^2\Psi_\e\right|\le C,\quad\text{in }\Omega_\e.
\]
In $\Omega_\e$, taking into account~\ref{lemma:conv:2:eq:1}, we have the following inequality
\begin{align*}
f(u_\e)-f(u_0)-\e f'(u_0)\phi&\le C\e^2\left(\phi^2+\e(\phi+2\bar\psi)^2+(\phi+2\bar\psi)\bar\psi\right)+\e^2f'(u_0)\Psi_\e\\
	&\le \frac{C_\infty}{\lambda}\e^2\sum_{j=1}^N\cosh(2\sqrt{\mu_1}x_j)+\e^2f'(u_0)\Psi_\e,
\end{align*}
for some $C_\infty>0$, that implies
\begin{equation}
\label{eq:PSI}
-\Delta\Psi_\e-\lambda f'(u_0)\Psi_\e\le C_\infty\sum_{j=1}^N\cosh(2\sqrt{\mu_1}x_j).
\end{equation}
Fix $\mu_\infty=4\mu_1$. Note that $\mu_\infty<\mu_0$ thanks to~\ref{scelta:mu}. Then taking into account Lemma~\ref{lemma:costruzione:omegai} set $\omega_\infty=\omega_{\mu_\infty}$ and for $(x,y)\in\R^N\times(1-\sigma,1+\sigma)$ consider
\[
\psi_\infty(x,y)=\frac{C_\infty}{c_\infty\mu_\infty}\sum_{j=1}^N\left(\omega_\infty(y)-c_\infty\right)\cosh(\sqrt{\mu_\infty}x_j),
\]
where  $0<c_\infty<\inf\limits_{(-1-\sigma,1+\sigma)}\omega_\infty$.

Clearly $\psi_\infty>0$ in $\overline\Omega_\e$ and $\psi_\infty$ satisfies the following inequality
\begin{align*}
-\Delta\psi_\infty-\lambda f'(u_0)\psi_\infty&=\frac{C_\infty}{c_\infty\mu_\infty}\sum_{j=1}^Nc_\infty\left(\mu_\infty+\lambda f'(u_0)\right)\cosh(\sqrt{\mu_\infty}x_j)\\
	&\ge C_\infty\sum_{j=1}^N\cosh(2\sqrt{\mu_1}x_j),
\end{align*}
which together to~\ref{eq:PSI} gives
\[
\begin{cases}
-\Delta(\Psi_\e-\psi_\infty)-\lambda f'(u_0)(\Psi_\e-\psi_\infty)\le0 &\text{in}\ \Omega_\eps\\
\Psi_\e-\psi_\infty<0&\text{on}\ \partial\Omega_\e
\end{cases}
\]
and again the maximum principle provides $\Psi_\e-\psi_\infty<0$ in $\Omega_\e$. For $C(K)=\max_K \psi_\infty$ the proof is complete.
\end{proof}

\subsection{Proof of Theorem~\ref{THM1}}
\begin{proof}

We have that $(i)$ and $(ii)$ follow by $(iv)$ and $(iii)$ of Lemma~\ref{lemmaomegae} respectively.
The proof of $(iii)$ is given in Lemma~\ref{lemmalambda*}.
 
Let us prove $(iv)$.
By Lemma~\ref{propmassimi} we have that $u_0+\e\phi$ admits $k$ strict maxima points. Fix a compact set $K\subset\!\subset\Omega_\e$ containing such points.
On the other hand Lemma~\ref{lemmaconvo(e)} implies $u_\e=u_0+\e\phi+O(\e^2)$ in $K$ and so the claim follows.
\end{proof}
\begin{remark}
\label{rmk:general}
We can prove a little more general version of Theorem~\ref{THM1}: indeed assumption~\ref{i3} can be dropped and we can simply ask that there exists $u_0$ stable solution of
\[
\begin{cases}
-u''=g(u)&\hbox{in }(-1,1)\\
u>0&\hbox{in }(-1,1)\\
u(\pm1)=0.
\end{cases}
\]
Finally we build $\Omega_\e$ as before and then ask for the existence of a stable solution $u_\e$ of problem~\ref{PB1} in $\Omega_\e$.
\end{remark}

\begin{remark}\label{R}
Let us show that the assumption that $u_\e$ is a \emph{stable} solution is crucial in our construction. To do this 
let us assume $N=1$ for simplicity and consider $f(t)=\lambda_1t$, where $\lambda_1$ is the first eigenvalue of the Dirichlet problem. In this case the first eigenvalue of the linearized problem at the first eigenfunction is $0$. Let us see that it is not possible to construct a domain $\Omega_\e$ as in the previous section.
Indeed if we argue as before we have that $u_0(y)=\cos\left(\frac{\pi}{2}y\right)$ is the solution of
\[
\begin{cases}
-u''=\frac{\pi^2}4u&\hbox{in }(-1,1)\\
u>0&\hbox{in }(-1,1)\\
u(\pm1)=0.
\end{cases}
\]
Now, for $n\in\N$, $\alpha_i\in\R$ (again with $\alpha_1=-1$) and $\mu_i>0$ for $i=1,\dots,n$, we have that
\[
\phi(x,y)=\sum_{i=1}^n\alpha_i\cosh(\sqrt{\mu_i}x)\cos\left(\sqrt{\pi^2/4+\mu_i}y\right),
\]
solves the linearized problem, i.e.
\[
-\Delta\phi=\frac{\pi^2}{4}\phi\quad\hbox{in }\R^2,
\]
As for the general case we observe that $u_0(0)+\e\phi(0,0)>0$ for $\e$ small enough and then we set $\Omega_\e=\set{u_0+\e\phi>0}$. Now for any $\mu_1>0$ set
\[
\bar y=\frac{\frac\pi2}{\sqrt{\pi^2/4+\mu_1}}\in(0,1),
\]
and then we can find $\delta>0$ sufficiently small such that if $\e$ is small enough it holds
\[
\R\times\set{y=\bar y+\delta}\subseteq\Omega_\e,
\]
showing that the domain $\Omega_\e$ is not bounded. This shows that our construction fails.
\end{remark}

\section{The torsion problem: proof of Theorem~\ref{THM2}}
\label{SEZ4}
In this section we take $x\in\R$ and $y=(y_1,\dots,y_N)\in\R^N$ and we assume the hypothesis of Theorem~\ref{THM2}. We construct a solution $u_\e$  of the torsion problem ($g(u)=\mathrm{Const.}$) with $k$ maximum points in a domain $\Omega_\e$ whose boundary has {\em positive mean curvature}. Here
the domain $\Omega_\e$ and the function $u_\e$ are similar to the ones defined in Section~\ref{SEZ2}. 

Let us start by introducing the following function $u_\e:\R^{N+1}\to\R$, given by
\[
u_\e(x,y)=u_0(y)+\e\phi(x,y)\quad x\in\R,\ y\in\R^N,
\]
where 
\[
u_0(y)=\frac{1}{2}\sum_{j=1}^N\left(1-y_j^2\right)=\frac{1}{2}\left(N-\abs{y}^2\right),
\] 
which solves
\begin{equation}
\begin{cases}
-\Delta u=N&\hbox{in }\mathcal C\\
u=0&\hbox{on }\partial\mathcal C
\end{cases}
\end{equation}
in the cylinder $\mathcal C=\{(x,y)\in\R^{N+1}| \abs{y}^2<N\}$.  Finally $\phi$ is an harmonic function in the whole $\R^{N+1}$ defined by
\[
\phi(x,y)=\sum_{j=1}^Nv(x,y_j),
\]
where $v(t,s)=\Re (F_k(t+is))$, for $t,s\in\R$ with 
\begin{align*}
F_k(t+is)&=-\prod_{\ell=1}^{k}\left[(t-t_\ell+is)(t+t_\ell+is)\right]\\
	&=-\prod_{\ell=1}^{k}\left(t^2-s^2-t_\ell^2+2its\right),\qquad\quad\text{for } 0<t_1<\dots<t_{k},
\end{align*}
and $\Re(\cdot)$ stands for the real part of a complex function. 
Note that $v$ is symmetric with respect to both $\{t=0\}$ and $\{s=0\}$ and it can be written as
\begin{equation}
\label{prv}
v(t,s)=-\sum_{h=0}^{2k}a_hP_h(t,s),
\end{equation}
where $P_h$ is an harmonic polynomial of degree $h$, $a_{2k}=1$ and 
\begin{equation}
\label{prv2}
P_{2k}(t,s)=\sum_{\ell=0}^kb_\ell t^{2k-2\ell}s^{2\ell},\quad b_0=b_k=1.
\end{equation}
Resuming we have that for $x\in\R$ and $y\in\R^N$
\[
\boxed{\begin{split}u_\e(x,y)&=u_0(y)+\e\phi(x,y)\\
&=\frac{1}{2}\left(N-\abs{y}^2\right)+\e\sum_{j=1}^Nv(x,y_j)\\
&=\frac{1}{2}\sum_{j=1}^N\left(1-y_j^2\right)-\e\sum_{j=1}^N\sum_{h=0}^{2k}a_hP_h(x,y_j).\end{split}}
\]
Since $F_k:\C\to\C$ is holomorphic, it easily follows that $\phi$ is harmonic and then $u_\e$ satisfies $-\Delta u_\e=N$.
Finally, we point out that $\partial_{y_iy_j}u_\eps=0$ for all $i\not=j$.

\subsection{Preliminary results}

In this section we show some properties of the function $u_\e$ and of the domain $\Omega_\e$ that we are going to define.

As in Section~\ref{SEZ2} we point out that
\[
u_\eps(0,0,\dots,0)=\frac{N}{2}+\eps\sum_{j=1}^Nv(0,0)\ge\frac{N}{4}>0,
\]
for $\e$ small enough and we denote by $\Omega_\e$ the connected component of $\set{u_0+\e\phi>0}$ containing the origin.

The following lemma proves some properties of the set $\Omega_\e$. 
\begin{lemma}
\label{lemma:torsione1}
The set $\Omega_\e$ satisfies the following properties.
\begin{enumerate}[(i)]
\item $\Omega_\e\subseteq C_\e$ for $\e$ small enough, where
\[
C_{\eps}=\Set{(x,y)\in\R^{N+1}|x\in(-M_\eps,M_\eps),\, \abs{y}^2< N(1+\eta)^2},
\]
for some $0<\eta<1$, and $M_\eps=\eps^{-\frac{1}{2k}}$. 
\item $\Omega_\e\supseteq[-t_{k},t_{k}]\times\{0\}^N$.
\item Let $(x^\eps,y^\eps)\in\partial\Omega_\eps$. If $\abs{y^\eps}\to0$ then we have
\begin{equation}
\label{lemma:curv2:eq1}
\abs{x^\eps}=\left(2\eps\right)^{-\frac{1}{2k}}(1+o(1))\to+\infty.
\end{equation}
On the other hand, if $\abs{x^\e}\le C$, then
\[
\abs{y^\eps}^2\to N.
\]
\item $\Omega_\e$ is symmetric with respect to the hyperplanes $x=0$ and $y_j=0$ for $j=1,\dots,N$. Moreover, it is a smooth and star-shaped domain with respect to the origin for $\e$ small enough.
\end{enumerate}
\end{lemma}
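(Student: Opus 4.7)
The plan is to mimic the proof of Lemma~\ref{lemmaomegae}, with one structural substitution: the dominant exponential term $-\cosh(\sqrt{\mu_1}x_j)$ of the previous construction is replaced here by the dominant monomial $-x^{2k}$ coming from $-P_{2k}(x,y_j)$ (since $a_{2k}=1$). Concretely, for $|y_j|$ bounded one has $v(x,y_j)=-x^{2k}+O(|x|^{2k-1})$ as $|x|\to\infty$, so
\[
\phi(x,y)=-Nx^{2k}+O(|x|^{2k-1})
\]
uniformly on strips of bounded $|y|$; in particular $v(x,y_j)\to-\infty$ uniformly in $|y_j|\le\sqrt N(1+\eta)$ as $|x|\to\infty$, so $\phi$ is bounded above on $\R\times\{|y|\le\sqrt N(1+\eta)\}$ by some constant $C_0\in\R$. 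The choice $M_\e=\e^{-1/(2k)}$ is exactly calibrated so that $\e M_\e^{2k}=1$, the polynomial analogue of the threshold $\e\omega_1(1+\eta)\cosh(\sqrt{\mu_1}M_\e)=3\norm{u_0}_{L^\infty}$ used in Section~\ref{SEZ2}.

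For $(i)$, the goal is $u_0+\e\phi<0$ on $\partial C_\e$. On the lateral face $\{|x|=M_\e\}$ the expansion above yields $\e\phi=-N+o(1)$, and together with $u_0\le N/2$ gives $u_0+\e\phi\le-N/2+o(1)<0$. On the cap $\{|y|^2=N(1+\eta)^2\}$, $u_0=-\tfrac{N}{2}((1+\eta)^2-1)<0$ is a fixed strictly negative constant while $\e\phi\le\e C_0=o(1)$, and the same conclusion follows. Part $(ii)$ is immediate: on $[-t_k,t_k]\times\{0\}^N$ the function $\phi$ is a polynomial on a compact set, hence $u_0(0)+\e\phi\ge N/2-\e C\ge N/4>0$ for $\e$ small, and this segment is connected to the origin inside $\{u_0+\e\phi>0\}$. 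For $(iii)$, on $\partial\Omega_\e$ one has $u_0(y^\e)=-\e\phi(x^\e,y^\e)$: if $|x^\e|\le C$ then $\phi$ is uniformly bounded, so $u_0(y^\e)\to 0$ and hence $|y^\e|^2\to N$; if $|y^\e|\to 0$ then $u_0(y^\e)\to N/2$, and combining with $\phi(x^\e,y^\e)=-N(x^\e)^{2k}(1+o(1))$ gives $\tfrac{N}{2}+o(1)=\e N(x^\e)^{2k}(1+o(1))$, i.e., $(x^\e)^{2k}=(2\e)^{-1}(1+o(1))$, which is~\ref{lemma:curv2:eq1}.

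For $(iv)$, the symmetries of $\Omega_\e$ are inherited from those of $u_0$ (even in each $y_j$) and of each $v(x,y_j)$ (even separately in $x$ and in $y_j$). For star-shapedness I would compute
\[
(x,y)\cdot\nabla(u_0+\e\phi)=-|y|^2+\e\bigl(x\,\partial_x\phi+y\cdot\nabla_y\phi\bigr)
\]
and bound this above by some $-\alpha<0$ uniformly on $\partial\Omega_\e$. Using the two regimes of $(iii)$: if $|x^\e|\le C$ then $\e\nabla\phi=O(\e)$ and $-|y^\e|^2\to-N$, giving $-N+o(1)$; if $|y^\e|\to 0$ then $x\,\partial_x\phi=-2kNx^{2k}+O(|x|^{2k-1})$ combined with $\e(x^\e)^{2k}\to 1/2$ yields $\e x\,\partial_x\phi\to-kN$, while $|y^\e|^2$ and $\e\,y\cdot\nabla_y\phi$ both vanish in the limit. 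The main obstacle is gluing these two regimes into a uniform bound; I would proceed by contradiction as in Lemma~\ref{lemmaomegae}: a sequence of boundary points along which the expression tended to $0$ would force simultaneously $|y^\e|\to 0$ and $(x^\e)^{2k}=o(1/\e)$, contradicting $(iii)$. Smoothness of $\partial\Omega_\e$ then follows from the nonvanishing of $\nabla(u_0+\e\phi)$ on $\partial\Omega_\e$ via the implicit function theorem.
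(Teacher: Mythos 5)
Your treatment of parts (i)--(iii) matches the paper: you identify the dominant monomial $-x^{2k}$ of $v$ (via $a_{2k}=1$), use the calibration $\e M_\e^{2k}=1$ on the lateral face and the uniform upper bound for $v$ on the cap to prove (i), verify $u_0+\e\phi>0$ on the compact segment for (ii), and read (iii) off the boundary relation $u_0(y^\e)=-\e\phi(x^\e,y^\e)$. Where you diverge is the star-shapedness in (iv). The paper avoids your two-regime case split entirely: on $\partial\Omega_\e$ it substitutes the constraint $|y|^2 = N + 2\e\sum_j v(x,y_j)$ into $(x,y)\cdot\nabla u_\e$, obtaining
\[
(x,y)\cdot\nabla u_\e = -N + \e\sum_{j=1}^N\bigl(xv_t(x,y_j)+y_jv_s(x,y_j)-2v(x,y_j)\bigr),
\]
and then exploits the graded structure of $v=-\sum_h a_hP_h$ through the Euler identity $t\partial_tP_h+s\partial_sP_h=hP_h$, giving $tv_t+sv_s-2v=-\sum_h(h-2)a_hP_h(t,s)\to-\infty$ as $|t|\to\infty$ uniformly for bounded $|s|$ (the top contribution being $-(2k-2)P_{2k}$). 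The sum in the display is therefore bounded above by a finite constant $Nd$, and $(x,y)\cdot\nabla u_\e\le -N+\e Nd\le-N/2$ follows in one stroke, with no asymptotic case analysis. Your plan---two regimes plus a contradiction argument---can indeed be pushed through, but you would still need to verify that on every boundary sequence with $|x^\e|\to\infty$ (including those with $|y^\e|$ bounded away from $0$) one has $\e\,x\,\partial_x\phi = -2kN\e(x^\e)^{2k}(1+o(1))\le 0$ and $\e\,y\cdot\nabla_y\phi = O\bigl(\e(x^\e)^{2k-2}\bigr)=o(1)$, and that both remaining terms are nonpositive so that ``the expression tends to $0$'' really does force both $|y^\e|\to0$ and $\e(x^\e)^{2k}\to0$. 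The substitution via the boundary equation absorbs all of those checks automatically and is the one genuine device you did not find.
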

\begin{proof}
To prove $(i)$ we firstly show that
\begin{equation}
\label{eqausiliaria}
u_\eps\le-1/2,\quad\text{on }\Set{(x,y)\in\R^{N+1}|x=\pm M_\e,\,\abs{y}^2<N(1+\eta)^2},
\end{equation}
for $\e$ small enough. Indeed by~\ref{prv2} we get
\[
\eps P_{2k}(\pm M_\eps,s)=\eps\sum_{\ell=0}^kb_\ell\left(\eps^{-\frac{1}{2k}}\right)^{2k-2\ell}s^{2\ell}=1+o(1),\quad\text{as }\eps\to0,
\]
uniformly with respect to $\abs{s}<\sqrt N(1+\eta)$.
Similarly we have
\[
\eps P_{h}(\pm M_\eps,s)=o(1),\quad\text{for all }0\le h\le2k-1.
\]
Finally, for $x=\pm M_\eps$ and $\abs{y}^2\le N(1+\eta)^2$ we have
\begin{align*}
u_\eps(x,y)\le\frac{N}{2}+\eps\sum_{j=1}^Nv(\pm M_\eps,y_j)(1+o(1))=\frac{N}{2}-N+o(1)\le-\frac{1}{2}.
\end{align*}

On the other hand by~\ref{prv} and since $a_{2k}=1$ we get
\[
\sup_{t\in\R}\max_{s\in[-\sqrt N(1+\eta),\sqrt N(1+\eta)]}v(t,s)=C\in\R.
\]
Then for all $(x,y)\in\overline C_\eps$ with $\abs{y}^2=N(1+\eta)^2$ we obtain
\[
u_\eps(x,y)=-\frac{N}{2}\eta^2-N\eta+\eps\sum_{j=1}^Nv(x,y_j)<-\frac{N}{2}\eta^2<0,
\]
for $\eps$ small enough which together to~\ref{eqausiliaria} proves $(i)$.

Concerning $(ii)$, we know that the origin belongs to $\Omega_\e$ and since $u_\eps$ is continuous, then $\Omega_\eps$ is an open and connected set. Finally if $\eps$ satisfies
\[
\eps<\frac{u_0(0,\dots,0)}{\max_{x\in[-t_{k},t_{k}]}(-\phi(x,0,\dots,0))},
\]
then $[-t_k,t_{k}]\times\{0\}^N\subseteq\Omega_\eps$.

In order to prove $(iii)$, let $(x^\eps,y^\eps)\in\partial\Omega_\eps$. Then one has
\begin{equation}
\label{lemma:curv2:eq2}
\frac{1}{2}\left(N-\abs{y^\e}^2\right)=-\eps\sum_{j=1}^Nv(x^\eps,y_j^\eps).
\end{equation}
If $\abs{x^\eps}\le C$, $v(x^\eps,y_j^\eps)$ is bounded and then we easily get $\abs{y^\eps}^2\to N$.

Then we can assume $\abs{x^\eps}\to+\infty$.
In particular, for all $j=1,\dots,N$, it holds $v(x^\eps,y_j^\eps)=-(x^\eps)^{2k}(1+o(1))$ and from~\ref{lemma:curv2:eq2} we get
\[
(x^\eps)^{2k}=\frac{1}{2}\left(1-\frac{\abs{y^\eps}^2}{N}\right)\eps^{-1}(1+o(1))=\frac{1}{2}\eps^{-1}(1+o(1)),
\]
and in particular~\ref{lemma:curv2:eq1} holds.

The symmetry properties of the domain immediately follow from the ones of $u_\e$. Then to finish the proof it is enough to prove that there exists $\alpha>0$ such that
\[
x\partial_{x}u_\eps+\sum_{j=1}^Ny_j\partial_{y_j}u_\eps\le-\alpha<0,\quad\text{for all }(x,y)\in\partial\Omega_\eps.
\]
We have
\[
x\partial_{x}u_\eps+\sum_{j=1}^Ny_j\partial_{y_j}u_\eps=-\sum_{j=1}^Ny_j^2+\eps\sum_{j=1}^N\left(xv_t(x,y_j)+y_jv_{s}(x,y_j)\right).
\]
On the other hand since $u_\eps(x,y)=0$ on $\partial\Omega_\eps$ we have 
\[
\sum_{j=1}^Ny_j^2=N+2\eps\sum_{j=1}^Nv(x,y_j),
\]
and then
\[
x\partial_{x}u_\eps+\sum_{j=1}^Ny_j\partial_{y_j}u_\eps=-N+\eps\sum_{j=1}^N\big(xv_t(x,y_j)+y_jv_s(x,y_j)-2v(x,y_j)\big).
\]
Since we have that 
\begin{align*}
tv_t(t,s)+sv_s(t,s)-2v(t,s)&
=-\sum_{h=0}^{2k}a_h\left(t\partial_tP_h(t,s)+s\partial_sP_h(t,s)-2P_h(t,s)\right)\\
&=-\sum_{h=0}^{2k}(h-2)a_hP_h(t,s)\to-\infty,
\end{align*}
for $\abs{t}\to+\infty$ uniformly with respect to $\abs{s}<\sqrt N(1+\eta)$. Hence
\[
\sup_{(t,s)\in\R\times[-\sqrt N(1+\eta),\sqrt N(1+\eta)]}tv_t(t,s)+sv_s(t,s)-2v(t,s)=d<+\infty,
\]
and then
\[
\sum_{j=1}^N\left(xv_t(x,y_j)+y_jv_s(x,y_j)-2v(x,y_j)\right)\le Nd<+\infty.
\]
Finally
\[
\sup_{\partial\Omega_\eps}\left(x\partial_{x}u_\eps+\sum_{j=1}^Ny_j\partial_{y_j}u_\eps\right)\le-N+o(1)\le-\frac{N}{2},
\]
for $\eps$ small enough. Of course $x\partial_{x}u_\eps+\sum_{j=1}^Ny_j\partial_{y_j}u_\eps\not=0$ on $\partial\Omega_\eps$ implies that $\partial\Omega_\eps$ is a smooth hypersurface.
\end{proof}

\begin{remark}
\label{rmk1}
In particular from $(iii)$ of Lemma~\ref{lemma:torsione1} we deduce that  $\Omega_\e$ locally converges to the cylinder $\mathcal C=\set{(x,y)\in\R^{N+1}| \abs{y}^2<N}$.
\vskip0.2cm
Equation~\ref{lemma:curv2:eq1} will be useful in the computation of the curvature of $\partial\Omega_\e$ in next subsection.
\end{remark}

\begin{lemma}
\label{lemma:torsione2}
The function $u_\eps$ has at least $k$ different nondegenerate local maxima in $\Omega_\eps$ for $\eps$ small enough.
\end{lemma}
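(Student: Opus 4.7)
The plan is to locate the $k$ nondegenerate maxima as points of the form $(x_m,0,\dots,0)\in\R\times\R^N$, where $x_m$ ranges over the local maxima of the one-variable polynomial $t\mapsto v(t,0)$. The first task is to read off the structure of this polynomial: directly from the definition,
\[
v(t,0)=\Re F_k(t)=-\prod_{\ell=1}^k(t^2-t_\ell^2),
\]
which is an even polynomial of degree $2k$ with simple real zeros at $\pm t_1,\dots,\pm t_k$ and leading coefficient $-1$. Rolle's theorem then produces $2k-1$ distinct real critical points, all nondegenerate (a polynomial of degree $2k-1$ with $2k-1$ real roots has only simple roots), and since $v(t,0)\to-\infty$ as $\abs{t}\to+\infty$ with extrema alternating in type, exactly $k$ of them are local maxima $x_1,\dots,x_k\in(-t_k,t_k)$.

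The second task is to confirm that each $(x_m,0,\dots,0)$ is a critical point of $u_\e$ lying inside $\Omega_\e$. Criticality will follow by exploiting that $v(t,s)$ is even in $s$: this forces $v_s(t,0)\equiv 0$, so $\partial_{y_j}u_\e(x,0)=-0+\e v_s(x,0)=0$ at $y=0$, while $\partial_x u_\e(x,0,\dots,0)=\e\sum_{j=1}^N v_t(x,0)=\e N v_t(x,0)$ vanishes precisely at $x=x_m$. Since $x_m\in(-t_k,t_k)$, Lemma~\ref{lemma:torsione1}(ii) already guarantees that $(x_m,0,\dots,0)\in\Omega_\e$ for $\e$ small.

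The third task is the Hessian computation at $(x_m,0,\dots,0)$. Two cancellations do all the work: the separated form $\phi(x,y)=\sum_j v(x,y_j)$ kills every off-diagonal entry in the $y$-block (as already observed in the paper, $\partial_{y_iy_j}u_\e=0$ for $i\ne j$), and the evenness of $v_t$ in $s$ makes the mixed entries $\partial_{xy_j}u_\e=\e v_{ts}(x_m,0)$ vanish as well. Hence the Hessian is diagonal, with entries
\[
\partial_{xx}u_\e=\e N\,v_{tt}(x_m,0)<0,\qquad \partial_{y_jy_j}u_\e=-1+\e\,v_{ss}(x_m,0)<0,
\]
the first being strictly negative because $x_m$ is a nondegenerate maximum of $v(\cdot,0)$, and the second for $\e$ small. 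Negative definiteness and thus nondegeneracy follow.

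The only place requiring genuine care is Step 1, namely the counting of local maxima of $v(\cdot,0)$; but this is essentially forced by the choice of $F_k$ having $2k$ simple real roots symmetric about $0$, so no real obstacle arises. Notice that, in contrast with Section~\ref{SEZ2}, no delicate estimate on $\e$ is needed beyond the smallness guaranteeing that $(x_m,0,\dots,0)\in\Omega_\e$ and that the $y_jy_j$-Hessian entries stay negative, because here $u_\e=u_0+\e\phi$ is the \emph{exact} solution of the torsion problem, not an approximation.
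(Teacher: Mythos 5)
Correct, and your argument follows the paper's own plan almost line by line: locate the $k$ nondegenerate local maxima of $q(t)=v(t,0)$, use the evenness of $v$ in $s$ together with the separated form $\phi(x,y)=\sum_j v(x,y_j)$ to show the Hessian of $u_\e$ at $(x_m,0,\dots,0)$ is diagonal with negative entries, and invoke Lemma~\ref{lemma:torsione1}(ii) to keep these points inside $\Omega_\e$. The one small (and welcome) deviation is in establishing nondegeneracy of the critical points of $q$: you get it immediately from Rolle's theorem applied to a degree-$2k$ polynomial with $2k$ simple real roots, thereby sidestepping the explicit second-derivative computation the paper carries out in Lemma~\ref{lemmanondegen}.
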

\begin{proof}
The proof is similar to the one of Lemma~\ref{propmassimi}.

For
\[
q(t)=\Re\left(F_k(t+i0)\right)=-\prod_{\ell=1}^{k}(t-t_\ell)(t+t_\ell)=v(t,0),
\]
we have $q(t)=0$ if and only if $t=\pm t_\ell$ for some $\ell=1,\dots,k$ and $q(t)\to-\infty$ as $\abs{t}\to+\infty$. Now assume $k$ even, the case $k$ odd follows by minor changes. Then there exist $\bar t_\ell \in(t_{2\ell +1},t_{2\ell +2})$ with $\ell =0,\dots,k/2$ such that
\[
q'(\bar t_\ell)=0,\quad\text{and}\quad q''(\bar t_\ell)<0\quad\forall\ell =0,\dots,k/2,
\]
see also Lemma~\ref{lemmanondegen}.

Moreover, from the definition of $v$, since every time a power of $s$ appears then it is an even power, we get that $\partial_{s}v(t,0)=\partial_{ts}v(t,0)=0$ for all $t\in\R$.
Then a straightforward computation gives
\[
\nabla u_\e(\bar t_\ell,0,\dots,0)=0.
\]
Next, for all $j=1,\dots,N$ and for all $\ell=0,\dots,k/2$, we have
\begin{equation}
\label{hessiana2}
\partial_{y_jy_j}u_\e(\bar t_\ell,0,\dots,0)=-1+\e \partial_{ss}v(\bar t_\ell,0)<0,
\end{equation}
for $\eps$ small enough. Finally in $(\bar t_\ell,0,\dots,0)$ one has
\begin{align*}
\partial_{xx}u_\e&=\e N q''(\bar t_\ell,0)<0,\\
\partial_{y_i y_j}u_\e&=0,\qquad\forall i\not=j,\\
\partial_{xy_j}u_\e&=\e\partial_{ts} v(\bar t_\ell,0)=0,
\end{align*}
which, together to~\ref{hessiana2}, show us that the Hessian matrix of $u_\e$ is negative definite in $(\bar t_\ell,0,\dots,0)$ for all $\ell=0,\dots,k/2$ and the proof is complete since $u_\e$ is even in the $x$ variable.
\end{proof}

\begin{remark}
We point out that $\Omega_\eps$ is not convex. Indeed, we know from Lemma~\ref{lemma:torsione1} that the domain is symmetric with respect to $\{x=0\}$ and $\{y_j=0\}$ for all $j=1,\dots,N$ and by the well known result by~\cite{gnn}, the domain cannot be convex otherwise every solution of problem~\ref{PB1} has exactly one critical point in contradiction with Lemma~\ref{lemma:torsione2}.
\end{remark}

\subsection{Curvature of the domain}

In this section we prove that the domain $\Om_\e$ previously defined has {\em positive mean curvature}.

Let us start by a technical lemma that gives us an explicit formula to compute the mean curvature for manifolds which are preimage of a regular value of real functions.
The proof is postponed to the Appendix.
\begin{lemma}
\label{lemma:curv1} 
Let $\Sigma=F^{-1}(0)$, for some $F\in\mathcal C^2(\R\times\R^N,\R)$. Assume $0$ is a regular value for $F$ and $F_{y_iy_j}=0$ for all $i\not=j$. Then the \emph{mean curvature} of $\Sigma$ is given by
\[
K_m=-\frac{1}{N\abs{\nabla F}^3}\left[\sum_{j=1}^{N}\left(F_{x}^2F_{y_jy_j}-2F_{x}F_{y_j}F_{xy_j}+F_{y_j}^2F_{xx}\right)+\sum_{j=1}^NF_{y_j}^2\sum_{\substack{\ell=1\\\ell\not=j}}^NF_{y_\ell y_\ell}\right].
\]
\end{lemma}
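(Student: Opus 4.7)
The plan is to invoke the classical formula that, for a hypersurface realized as a regular level set $\Sigma=F^{-1}(0)$, the mean curvature coincides (up to the standard sign/normalization convention the authors are using) with
\[
K_m=-\frac{1}{N}\,\mathrm{div}\!\left(\frac{\nabla F}{\abs{\nabla F}}\right),
\]
where $\nu=\nabla F/\abs{\nabla F}$ is the unit normal along $\Sigma$, and then to expand this divergence explicitly in the coordinates $(x,y_1,\dots,y_N)$, using the hypothesis $F_{y_iy_j}=0$ for $i\ne j$ to kill all off-diagonal terms among the $y$-variables.

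Concretely, I would first compute, for any smooth $G=\nabla F$ over $\R^{N+1}$,
\[
\mathrm{div}\!\left(\frac{\nabla F}{\abs{\nabla F}}\right)=\frac{\Delta F}{\abs{\nabla F}}-\frac{1}{\abs{\nabla F}^{3}}\sum_{k,\ell}\partial_k F\,\partial_\ell F\,\partial_{k\ell}F,
\]
by using $\partial_k\abs{\nabla F}=\abs{\nabla F}^{-1}\sum_\ell\partial_\ell F\,\partial_{k\ell}F$. Clearing the common denominator, it suffices to show
\[
\abs{\nabla F}^{2}\Delta F-\sum_{k,\ell}\partial_k F\,\partial_\ell F\,\partial_{k\ell}F=\sum_{j=1}^{N}\bigl(F_x^{2}F_{y_jy_j}-2F_xF_{y_j}F_{xy_j}+F_{y_j}^{2}F_{xx}\bigr)+\sum_{j=1}^{N}F_{y_j}^{2}\!\!\sum_{\substack{\ell=1\\ \ell\ne j}}^{N}\!F_{y_\ell y_\ell}.
\]

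Next I would expand the two sides. On the left, $\Delta F=F_{xx}+\sum_\ell F_{y_\ell y_\ell}$ and $\abs{\nabla F}^{2}=F_x^{2}+\sum_j F_{y_j}^{2}$, while the quadratic-in-gradient term splits into the $(x,x)$ contribution $F_x^{2}F_{xx}$, the mixed contributions $2\sum_\ell F_xF_{y_\ell}F_{xy_\ell}$, and the purely $y$ part $\sum_{i,j}F_{y_i}F_{y_j}F_{y_iy_j}$. At this point the hypothesis $F_{y_iy_j}=0$ for $i\ne j$ reduces the last sum to $\sum_j F_{y_j}^{2}F_{y_jy_j}$. Regrouping by diagonal/off-diagonal pairs exactly reproduces the claimed right-hand side: the $(F_x^{2}F_{y_jy_j}+F_{y_j}^{2}F_{xx}-2F_xF_{y_j}F_{xy_j})$ blocks come from pairing the $x$-part of $\abs{\nabla F}^{2}$ with the $y$-part of $\Delta F$ (and vice versa) minus the mixed gradient term, while $\sum_jF_{y_j}^{2}\sum_{\ell\ne j}F_{y_\ell y_\ell}$ collects the cross-terms $F_{y_j}^{2}F_{y_\ell y_\ell}$ with $\ell\ne j$ that survive after cancellation with the diagonal piece $\sum_j F_{y_j}^{2}F_{y_jy_j}$.

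There is essentially no obstacle beyond careful bookkeeping; the only thing to be mindful of is the sign/normalization convention, so I would briefly fix conventions at the start (mean curvature equals $1/N$ times the trace of the second fundamental form with respect to the outward normal $\nu=\nabla F/\abs{\nabla F}$, giving the leading minus sign $-1/(N\abs{\nabla F}^{3})$ after clearing denominators). Once the conventions are fixed, the identity is a purely algebraic computation completed in the step above, and the lemma follows.
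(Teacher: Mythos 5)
Your argument is correct and follows essentially the same route as the paper: both express the mean curvature as $-\frac{1}{N}$ times the divergence of the unit normal $\nabla F/\abs{\nabla F}$, expand that divergence, and invoke the hypothesis $F_{y_iy_j}=0$ for $i\ne j$ to kill the off-diagonal terms in the double sum $\sum_{k,\ell}\partial_k F\,\partial_\ell F\,\partial_{k\ell}F$. The only cosmetic difference is that the paper introduces $\Phi=1/\abs{\nabla F}$ and computes $\Phi_x,\Phi_{y_j}$ explicitly, whereas you clear the denominator up front, but the bookkeeping is identical.
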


Finally, we are able to compute the mean curvature of the boundary of the domain.

\begin{lemma}
\label{lemma:torsione5}
The mean curvature of the boundary of $\Omega_\e$ is strictly positive everywhere.
\end{lemma}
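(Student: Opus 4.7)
The natural strategy is to apply Lemma~\ref{lemma:curv1} to $F=u_\e$. The hypothesis $F_{y_iy_j}=0$ for $i\ne j$ is precisely the property recorded right after the definition of $u_\e$ in Section~\ref{SEZ4}. The formula of Lemma~\ref{lemma:curv1} then reduces the problem to showing that the quantity in square brackets, call it $B$, is strictly negative at every point of $\partial\Omega_\e$. Using the PDE $-\Delta u_\e=N$, i.e.\ $\sum_{j=1}^N F_{y_jy_j}=-N-F_{xx}$, and regrouping, one rewrites the bracket as
\[
-B=N|\nabla F|^2+F_{xx}F_x^2+2F_x\sum_{j=1}^N F_{y_j}F_{xy_j}+\sum_{j=1}^N F_{y_j}^2 F_{y_jy_j},
\]
in which the ``torsion term'' $N|\nabla F|^2$ is manifestly non-negative and will carry the main positive contribution.

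I would then exploit the decomposition $u_\e=u_0+\e\phi$ with $u_0(y)=\tfrac12(N-|y|^2)$ and $\phi$ harmonic, so that $F_x=\e\phi_x$, $F_{y_j}=-y_j+\e\phi_{y_j}$, $F_{xx}=\e\phi_{xx}$, $F_{y_jy_j}=-1+\e\phi_{y_jy_j}$ and $F_{xy_j}=\e\phi_{xy_j}$. Substituting into the formula above and collecting terms produces
\[
-B=NF_x^2+(N-1)|y|^2+R_\e(x,y),
\]
where $R_\e$ contains only the contributions carrying at least one explicit factor of $\e$ times derivatives of $\phi$. The key point is that the two leading contributions $NF_x^2$ and $(N-1)|y|^2$ are \emph{both non-negative} and play complementary roles on $\partial\Omega_\e$, so they cannot vanish simultaneously.

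The verification that $-B>0$ everywhere on $\partial\Omega_\e$ is then carried out by splitting the boundary as in Lemma~\ref{lemma:torsione1}$(iii)$. On the bounded part $|x^\e|\le R$ all derivatives of $\phi$ are $O(1)$, $|y^\e|^2\to N$ and $R_\e=O(\e)$, so $-B\to(N-1)N>0$, while $|\nabla F|^2\to N$ gives $K_m\to(N-1)/N^{3/2}>0$. On the unbounded part $|x^\e|\to+\infty$ I would introduce the parameter $\xi=\e(x^\e)^{2k}$, which by the boundary equation $u_\e=0$ and the leading-order expansion $\phi\approx-Nx^{2k}$ satisfies $\xi\in(0,1/2]$ and $|y^\e|^2\approx N(1-2\xi)$. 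Using the explicit polynomial form of $v$ together with the harmonicity $v_{tt}+v_{ss}=0$ a direct computation yields $NF_x^2\sim 4k^2N^3\xi^2/x^2$, $(N-1)|y|^2\sim(N-1)N(1-2\xi)$ and $R_\e=O(|y|^2/x^2)+O(\xi^2/x^4)$. Since $\xi^2$ and $1-2\xi$ cannot vanish simultaneously on $(0,1/2]$, the two leading terms dominate $R_\e$ for $x$ large and $-B>0$ also here.

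The main obstacle is the bookkeeping of the several competing small quantities ($\e$, $1/x$, $\xi$, $1-2\xi$, $|y|$) in the unbounded region, particularly near the tip $\xi\to 1/2$: there both $|\nabla F|\sim 1/x$ and $|y|\sim\sqrt{1-2\xi}$ degenerate at different rates, and one must use the harmonicity of $\phi$ carefully so that the a priori dangerous $O(\e x^{2k-2})$ terms inside $F_{xx}$ and $F_{y_jy_j}$ (which individually look comparable with the leading positive contributions) cancel inside the bracket. Once this is organised, the same computation in fact shows $K_m\to+\infty$ at the tip, consistent with the intuition that the ends of $\Omega_\e$ are the sharp regions of the boundary.
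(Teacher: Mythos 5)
Your plan is correct and reaches the same two dominant balances as the paper's proof, but by a genuinely different algebraic route. You first use the equation $-\Delta u_\e = N$ (equivalently $\sum_j F_{y_jy_j} = -N - F_{xx}$) to rewrite the bracket of Lemma~\ref{lemma:curv1} as
\[
-B = N\abs{\nabla F}^2 + F_{xx}F_x^2 + 2F_x\sum_{j=1}^N F_{y_j}F_{xy_j} + \sum_{j=1}^N F_{y_j}^2 F_{y_jy_j},
\]
and then, inserting $u_\e = u_0 + \e\phi$ with $u_0 = \tfrac12(N-\abs{y}^2)$, you extract the non-negative pair $NF_x^2 + (N-1)\abs{y}^2$ together with a remainder $R_\e$ carrying explicit $\e$-factors. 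This is a cleaner decomposition: the competition between $NF_x^2$ (dominant near the tips, where $\abs{y^\e}\to0$) and $(N-1)\abs{y}^2$ (dominant on the cylindrical part) is exhibited before any asymptotics are invoked, and the parameter $\xi = \e(x^\e)^{2k}\in(0,1/2]$ organizes the transition region neatly. The paper does not use this PDE rewrite: it estimates the bracket of Lemma~\ref{lemma:curv1} term by term using $F_{y_jy_j} = -1 + o(1)$ and the polynomial asymptotics of $v$, identifying the dominant contribution as $\sum_j F_{y_j}^2\sum_{\ell\neq j}F_{y_\ell y_\ell}\approx -(N-1)\abs{y}^2$ when $\abs{y^\e}\not\to0$ and as $\sum_j F_x^2 F_{y_jy_j}\approx -NF_x^2\approx-2^{1/k}N^3k^2\e^{1/k}$ when $\abs{y^\e}\to0$; these are exactly the leading terms you isolate. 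The bookkeeping you flag as the main obstacle is precisely what the paper carries out in~\ref{contoparziale:curv1} and~\ref{contoparziale:curv3}. A small clarification on your last remark: the cancellation you describe as delicate near the tip is in fact automatic, since $\e\phi_{xx},\e\phi_{y_jy_j}=O(\e\abs{x}^{2k-2})=O(\e^{1/k})\to0$ uniformly on $\Omega_\e$ by Lemma~\ref{lemma:torsione1}$(i)$, so $F_{y_jy_j}=-1+o(1)$ holds with no further use of harmonicity beyond the single rewrite you already perform.
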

\begin{proof} 
We will apply the previous lemma to $F(x,y)=u_\e(x,y)$. Note that $\nabla u_\e\not=0$ on $\partial\Omega_\e$ from $(iv)$ of Lemma~\ref{lemma:torsione1}.
Let $(x^\e,y^\e)\in\partial\Omega_\e$ and from the asymptotic behavior of the derivatives of $v(t,y)$ for $t\to\infty$ we have
\begin{align*}
v_t&=-2kt^{2k-1}(1+o(1)),&v_s&=c_kt^{2k-2}s(1+o(1)),\\
v_{tt}&=-2k(2k-1)t^{2k-2}(1+o(1)),&v_{ts}&=c'_kt^{2k-3}s(1+o(1)),\\
v_{ss}&=c_kt^{2k-2}(1+o(1)),
\end{align*}
and from the estimate $\abs{x^\eps}\le\eps^{-\frac{1}{2k}}$ we get that for all $j=1,\dots,N$ the following quantities
\[
\eps v_{t}(x^\eps,y_j^\eps),\quad\eps v_{s}(x^\eps,y_j^\eps),\quad\eps v_{tt}(x^\eps,y_j^\eps),\quad\eps v_{ts}(x^\eps,y_j^\eps),\quad\eps v_{ss}(x^\eps,y_j^\eps),
\]
go to $0$ as $\eps\to0$.

Then we proceed by considering the cases $\abs{y^\eps}\not\to0$ and $\abs{y^\eps}\to0$.

\emph{Case $\abs{y^\eps}\not\to0$.}\\
We point out that for $\eps$ small enough there exists $j\in\set{1,\dots,N}$ such that $\partial_{y_j}u_\eps\not=0$, otherwise $\abs{y^\eps}\to0$.
Then from Lemma~\ref{lemma:curv1} we have
\[
K_m=-\frac{-(N-1)\abs{y^\eps}^2(1+o(1))}{N\left(\abs{y^\eps}^2(1+o(1))\right)^{\frac{3}{2}}}=\frac{N-1}{N\abs{y^\eps}}(1+o(1))>0.
\]
Not that the assumption $N\ge2$ is crucial. Indeed if $N=1$ the curvature changes sign, see~\cite{gg3}.

\emph{Case $\abs{y^\eps}\to0$.}\\
In this case, by~\ref{lemma:curv2:eq1} we have that $x^\e\to+\infty$ and for all $j=1,\dots,N$ fixed $\partial_{y_j}u_\e=o(1)$. Recalling~\ref{lemma:curv2:eq1} again, the following estimates hold true
\begin{align*}
(\partial_{y_j}u_\eps)^2\partial_{xx}u_\eps&=o(\e^{1-\frac{2k-2}{2k}})=o(\e^\frac{1}{k}),\\
\partial_{x}u_\eps\partial_{y_j}u_\eps\partial_{xy_j}u_\eps&=o\left(\eps^{1-\frac{2k-1}{2k}}\eps^{1-\frac{2k-3}{2k}}\right)=o(\e^\frac{2}{k})=o(\e^\frac{1}{k}),\\
(\partial_{x}u_\eps)^2\partial_{y_jy_j}u_\eps&=-\left(-2Nk\eps(x^\eps)^{2k-1}\right)^2(1+o(1))\\
&=-2^\frac{1}{k}N^2k^2\e^{\frac{1}{k}}(1+o(1)).
\end{align*}
This yields
\begin{equation}
\label{contoparziale:curv1}
(\partial_{y_j}u_\eps)^2\partial_{xx}u_\eps-2\partial_{x}u_\eps\partial_{y_j}u_\eps\partial_{xy_j}u_\eps+(\partial_{x}u_\eps)^2\partial_{y_
jy_j}u_\eps=-2^\frac{1}{k}N^2k^2\e^{\frac{1}{k}}(1+o(1)).
\end{equation}
Moreover by similar computations
\begin{equation}
\label{contoparziale:curv3}
\sum_{j=1}^N(\partial_{y_j}u_\eps)^2\sum_{\substack{\iota=1\\\iota\not=j}}^{N}\partial_{y_\iota y_\iota}u_\eps=-(N-1)(1+o(1))\sum_{j=1}^N(\partial_{y_j}u_\eps)^2\le0.
\end{equation}
Finally, we can apply Lemma~\ref{lemma:curv1}, and putting together~\ref{contoparziale:curv1} and~\ref{contoparziale:curv3} we have
\begin{align*}
-N\abs{\nabla u_\eps}^3K_m&\le\sum_{j=1}^N\left((\partial_{y_j}u_\eps)^2\partial_{xx}u_\eps-2\partial_{x}u_\eps\partial_{y_j}u_\eps\partial_{xy_j}u_\eps+(\partial_{x}u_\eps)^2\partial_{y_jy_j}u_\eps\right)\\
	&=-2^\frac{1}{k}N^3k^2\e^{\frac{1}{k}}(1+o(1))<0,
\end{align*}
that is $K_m>0$.
\end{proof}

\subsection{Proof of Theorem~\ref{THM2}}
\begin{proof}
The claims follow from Lemma~\ref{lemma:torsione1}, Lemma~\ref{lemma:torsione2} and Lemma~\ref{lemma:torsione5} considering $u_\e/N$.
\end{proof}

\begin{remark}
It is also possible to treat the case $x=(x_1,\dots,x_M)\in\R^M$, with $M>1$, in such a way that the domain $\Omega_\eps$ grows in $M$ directions.
The proof works replacing the function $u_\e$ by the following one
\[
\tilde u_\eps(x,y)=\frac{1}{2}\sum_{j=1}^N\left(1-y_j^2\right)+\eps\sum_{i=1}^M\sum_{j=1}^Nv(x_i,y_j).
\]
The computations are very similar to the case $M=1$. It is not difficult to generalize Lemma~\ref{lemma:curv1} taking into account that $\partial_{x_ix_h}u_\e=0$ for all $i\not=h$.
\end{remark} 

\appendix
\section{}
\label{appendix}
Here we show that there exist coefficients $\alpha_i\in\R$ such that the function introduced in subsection \ref{ss1}
\[
F(t)=\sum_{i=1}^n\alpha_i\cosh(\sqrt{\mu_i}t),
\]
admits $k$ nondegenerate maxima points.
\begin{lemma}
\label{lemmacosh}
For $k\in\N$ fixed, there exists $n=n(k)\in\N$ and $\alpha_1,\dots,\alpha_n\in\R$ such that the function
\[
F(t)=\sum_{i=1}^n\alpha_i\cosh(\sqrt{\mu_i}t),
\]
admits $k$ nondegenerate maxima points for $\alpha_1=-1$.
\end{lemma}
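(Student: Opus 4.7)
My plan is to exploit the Chebyshev identity $\cosh(j\tau)=T_j(\cosh\tau)$, which turns a linear combination of hyperbolic cosines with integer frequencies into an ordinary polynomial in $\cosh\tau$. This reduces the question to the elementary one of designing a real polynomial with a prescribed critical-point structure on the half-line $(1,+\infty)$.

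Concretely, I would fix an integer $n=n(k)\ge k+1$, take integer frequencies $m_i=n+1-i$ for $i=1,\dots,n$, and set $\mu_i=\e m_i^2$ with $\e>0$ small enough that $\mu_1=\e n^2<\mu_0/4$; this respects the ordering $\mu_0/4>\mu_1>\dots>\mu_n>0$ required in~\ref{scelta:mu}. After the rescaling $\tau=\sqrt{\e}\,t$ and the relabeling $\beta_j=\alpha_{n+1-j}$, so that $\beta_n=\alpha_1=-1$, the function becomes
\[
F(t)=\sum_{j=1}^n\beta_j\cosh(j\tau)=P(\cosh\tau),
\]
where $P(u)=\sum_{j=1}^n\beta_j T_j(u)$ is a polynomial of degree $n$ whose leading coefficient equals $\beta_n\cdot 2^{n-1}=-2^{n-1}$; in particular $F(t)\to-\infty$ as $|t|\to+\infty$.

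Differentiating gives $F'(t)=\sqrt{\e}\,P'(\cosh\tau)\sinh\tau$, so the nonzero critical points of $F$ correspond bijectively (via $u=\cosh\tau$) to the critical points of $P$ in $(1,+\infty)$. A direct computation of $F''$ at such a point shows it has the same sign as $P''(u)$, so nondegenerate maxima of $P$ in $(1,+\infty)$ yield symmetric pairs of nondegenerate maxima of $F$. The point $\tau=0$ is always critical, and $F''(0)=\e P'(1)$, so it is a nondegenerate maximum exactly when $P'(1)<0$ and a nondegenerate minimum when $P'(1)>0$.

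It then remains to construct a polynomial $P$ of degree $n$ with leading coefficient $-2^{n-1}$ having $\lfloor k/2\rfloor$ nondegenerate maxima in $(1,+\infty)$, together with $P'(1)<0$ if $k$ is odd (so that $\tau=0$ also contributes a maximum) and $P'(1)>0$ if $k$ is even. This is a routine polynomial construction: one may take $P'(u)=-c\prod_{j=1}^{n-1}(u-u_j)$ with $c>0$ and knots $1<u_1<u_2<\cdots<u_{n-1}$ chosen to interlace the required maxima and minima of $P$ with $P'(1)$ of the requested sign, integrate to recover $P$, and finally rescale so that the leading coefficient is exactly $-2^{n-1}$. The change of basis from monomials to $\{T_j\}$ then produces the coefficients $\beta_j$, hence the desired $\alpha_i$. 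The main conceptual obstacle is recognizing the correct reformulation: once one sees that the constraint $\alpha_1=-1$ translates precisely into fixing the leading coefficient of $P$, the rest is elementary polynomial algebra.
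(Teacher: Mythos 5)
Your argument is correct and essentially reproduces the paper's strategy: both proofs reduce the problem to constructing a real polynomial $P$ with prescribed nondegenerate maxima on $(1,+\infty)$ and a fixed leading coefficient (the translation of the constraint $\alpha_1=-1$), and then transport this back through composition with $\cosh$ --- you go via the Chebyshev identity $\cosh(j\tau)=T_j(\cosh\tau)$, whereas the paper performs the inverse change of basis, expanding $(\cosh t)^m$ as a combination of the $\cosh(\ell t)$ via the binomial formula, and the paper sidesteps your case analysis at $t=0$ by simply placing all $k$ prescribed maxima at strictly positive $t$ (evenness then doubles them for free). One small caveat in your write-up: once all roots of $P'$ are simple and lie in $(1,+\infty)$, the number of maxima of $P$ there and the sign of $P'(1)$ are \emph{both} determined by the parity of $n$ and cannot be prescribed independently as your wording suggests; this is harmless, however, since in either parity case $F$ ends up with exactly $n$ nondegenerate maxima, so taking $n\ge k$ suffices.
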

\begin{proof}
Let $1<\tau_1<\dots<\tau_k$ . For some $n=n(k)\in\N$ consider a polynomial $P(t)=\sum_{j=1}^na_jt^j$ such that
\begin{gather*}
a_n=-1\\
P'(\tau_i)=0,\quad\forall i=1,\dots,k,\\
P''(\tau_i)<0,\quad\forall i=1,\dots,k.
\end{gather*}
Let $0<t_1<\dots<t_k$ be such that $\cosh(t_i)=\tau_i$ for all $i=1,\dots,k$ and define $h(t)=P(\cosh(t))$. Then we have
\[
h'(t_i)=0,\qquad h''(t_i)<0,
\]
that is $\tau_1,\dots,\tau_k$ are nondegenerate maximum point for $h$. Up to a constant, from the binomial formula it is easy to see that for all $m\in\N$
\[
(\cosh(t))^m=\sum_{\ell=1}^mc(m,\ell)\cosh(\ell t),
\]
for suitables $c(m,\ell)>0$, with $c(m,m)=1$. Finally, for $\delta=\frac{\mu_0}{8n}$ the function
\[
F(t)=\sum_{j=1}^na_j\sum_{\ell=1}^jc(j,\ell)\cosh(\delta\ell t)
\]
is the function we were looking for. We point out that from the choice of $\delta$, \ref{scelta:mu} is satisfied.
\end{proof}

Now we prove that the critical points of the function
\[
q(t)=-\prod_{\ell=1}^k(t^2-t_k^2),\quad\text{with }k\in\N,\,\,\, k\ge2\text{ and }0<t_1<\dots<t_k,
\]
are nondegenerate.

\begin{lemma}
\label{lemmanondegen}
Let $q(t)=-\prod_{\ell=1}^k(t^2-t_k^2)$ with $k\in\N$, $k\ge2$ and $0<t_1<\dots<t_k$. Then the critical points of $f$ are nondegenerate.
\end{lemma}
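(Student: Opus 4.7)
The statement appears to contain two minor typographical slips: the product is surely meant to be $\prod_{\ell=1}^k(t^2-t_\ell^2)$ rather than $\prod_{\ell=1}^k(t^2-t_k^2)$ (otherwise $q$ collapses to $-(t^2-t_k^2)^k$, whose critical points are patently degenerate and the statement would be false), and ``$f$'' in the conclusion is $q$, as confirmed by the way this lemma is cited inside the proof of Lemma~\ref{lemma:torsione2}, where the function whose critical points are to be shown nondegenerate is precisely $q$. With these interpretations in place, my plan is to deduce nondegeneracy from Rolle's theorem combined with a degree count, avoiding any explicit derivative computation.

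First I would note that, because $0<t_1<\dots<t_k$, the $2k$ real numbers $-t_k,\dots,-t_1,t_1,\dots,t_k$ are pairwise distinct, so $q$ is a polynomial of degree $2k$ with exactly $2k$ simple real roots. Applying Rolle's theorem to each of the $2k-1$ consecutive open intervals determined by these roots produces at least one zero of $q'$ inside each interval, hence at least $2k-1$ pairwise distinct real zeros of $q'$.

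Next, since $\deg q'=2k-1$, the fundamental theorem of algebra gives at most $2k-1$ roots counted with multiplicity. Comparing with the lower bound just obtained forces $q'$ to have exactly $2k-1$ distinct real roots, each of them simple. A simple root of $q'$ is by definition a point at which $q''$ does not vanish, which is exactly the statement that every critical point of $q$ is nondegenerate.

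I do not foresee any real obstacle: the entire argument reduces to Rolle plus a degree count, and the hypothesis $k\ge 2$ is used only to ensure there is more than one critical point to discuss (the proof in fact goes through for every $k\ge 1$, automatically covering the symmetric critical point at $t=0$ provided by Rolle on $(-t_1,t_1)$). A more computational alternative would be to write $q'(t)=-2t\sum_j\prod_{\ell\ne j}(t^2-t_\ell^2)$ and verify simplicity of each zero by a sign analysis on the intervals $(t_j,t_{j+1})$, but this is strictly more work than the soft argument above.
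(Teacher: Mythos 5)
Your proof is correct, and your reading of the two typos (the product should be $\prod_{\ell=1}^k(t^2-t_\ell^2)$, and ``$f$'' means $q$) matches what the paper actually uses. Your route, however, is genuinely different from the paper's. The paper argues computationally: it checks $q'(0)=0$, $q''(0)\neq0$ directly, and at a nonzero critical point $\tau$ it differentiates the product, uses the relation $q'(\tau)=0$ to eliminate a sum, and arrives at the identity $q''(\tau)=-4\tau^2\,q(\tau)\sum_{\ell=1}^k(\tau^2-t_\ell^2)^{-2}\neq0$ (note $q(\tau)\neq0$ since the roots of $q$ are not critical points). Your argument instead is soft: the $2k$ simple real roots of $q$ give, via Rolle, at least $2k-1$ distinct real zeros of $q'$, and since $\deg q'=2k-1$ these exhaust all zeros of $q'$ and are simple, i.e.\ $q''\neq0$ at every critical point. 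Both proofs are complete; yours is shorter, avoids all computation, and works for every $k\ge1$, while the paper's explicit formula carries extra sign information (at a critical point $\tau\neq0$ the sign of $q''(\tau)$ is opposite to that of $q(\tau)$), which meshes naturally with the way Lemma~\ref{lemmanondegen} is invoked in Lemma~\ref{lemma:torsione2} to produce nondegenerate \emph{maxima}; with your argument that step still goes through, since an interior maximum of $q$ on an interval where $q>0$ has $q''\le0$, and nondegeneracy upgrades this to $q''<0$.
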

\begin{proof}
Let $k>2$ (the case $k=2$ is left to the reader).
A straightforward computation shows that $q'(0)=0$ and $q''(0)\not=0$. Now let $\tau\ne0$ be such that $q'(\tau)=0$. Of course $q(\tau)\not=0$ and
\[
0=q'(\tau)=-2\tau\sum_{\ell=1}^k\prod_{\substack{h=1\\h\not=\ell}}^k(\tau^2-t_h^2),
\]
Finally, one has
\begin{align*}
q''(\tau)&=-4\tau^2\sum_{\ell=1}^k\sum_{\substack{h=1\\h\not=\ell}}^k\prod_{\substack{m=1\\m\not=\ell\\m\not=h}}^k(\tau^2-t_m^2)\\\\
	&=-4\tau^2\sum_{\ell=1}^k\frac{1}{(\tau^2-t_\ell^2)}\sum_{\substack{h=1\\h\not=\ell}}^k\prod_{\substack{m=1\\m\not=h}}^k(\tau^2-t_m^2)\\
	&=-4\tau^2\sum_{\ell=1}^k\frac{1}{(\tau^2-t_\ell^2)}\left[\underbrace{\sum_{h=1}^k\prod_{\substack{m=1\\m\not=h}}^k(\tau^2-t_m^2)}_{=0\hbox{ since }q'(\tau)=0}-\prod_{\substack{m=1\\m\not=\ell}}^k(\tau^2-t_m^2)\right]\\
	&=4\tau^2\sum_{\ell=1}^k\frac{1}{(\tau^2-t_\ell^2)}\prod_{\substack{m=1\\m\not=\ell}}^k(\tau^2-t_m^2)\\
	&=-4\tau^2q(\tau)\sum_{\ell=1}^k\frac{1}{(\tau^2-t_\ell^2)^2}\not=0.\qedhere
\end{align*}
\end{proof}

The following is the proof of Lemma~\ref{lemma:curv1} from Section~\ref{SEZ4}.

\begin{proof}[Proof of Lemma~\ref{lemma:curv1} ]
Let $\Phi=\frac1{\abs{\nabla F}}$ and consider the normal field
\[
\mathbf N=-\Phi\cdot(F_{x},F_{y_1},\dots,F_{y_N}).
\]
Then the mean curvature of $\Sigma$ is given by
\[
K_m(p)=\frac{1}{N}\mathrm{tr}(d\mathbf N_p).
\]
Taking into account that
\begin{align*}
\Phi_{x}&=-\Phi^3\left(F_{x}F_{xx}+\sum_{j=1}^NF_{y_j}F_{xy_j}\right),\\
\Phi_{y_j}&=-\Phi^3\left(F_{x}F_{xy_j}+F_{y_j}F_{y_jy_j}\right),
\end{align*}
one has
\begin{align*}
-\mathrm{tr}(d\mathbf N_p)=&\,\,\Phi\Delta F+\Phi_{x}F_{x}+\sum_{j=1}^N\Phi_{y_j}F_{y_j}\\
	=&\,\,\Phi^3\left[\abs{\nabla F}^2\left(F_{xx}+\sum_{j=1}^NF_{y_jy_j}\right)-\left(F_{x}F_{xx}+\sum_{j=1}^NF_{y_j}F_{xy_j}\right)F_x\right.\\
	&-\left.\sum_{j=1}^N\left(F_{x}F_{xy_j}+F_{y_j}F_{y_jy_j}\right)F_{y_j}\right]\\
	=&\,\,\Phi^3\left[\sum_{j=1}^N\left(F_x^2F_{y_jy_j}-2F_xF_{y_j}F_{xy_j}+F_{y_j}^2F_{xx}\right)+\sum_{j=1}^NF_{y_j}^2\sum_{\substack{\ell=1\\\ell\not=j}}^NF_{y_\ell y_\ell}\right]
\end{align*}
which yields the claim.
\end{proof}

\bibliographystyle{alphaabbrv}
\bibliography{paper.bib}

\end{document}